\documentclass[11pt]{amsart}
\usepackage{amssymb}

\newtheorem{lemma}{Lemma}[section]

\newtheorem{theorem}[lemma]{Theorem}
\newtheorem{remark}[lemma]{Remark}
\newtheorem{proposition}[lemma]{Proposition}
\newtheorem{corollary}[lemma]{Corollary}

\newtheorem{example}[lemma]{Example}

\begin{document}
\title{Open book structures on semi-algebraic manifolds}

\begin{abstract}
Given a $C^2$ semi-algebraic mapping $F: \mathbb{R}^N \rightarrow \mathbb{R}^p,$ we consider its restriction to $W\hookrightarrow \mathbb{R^{N}}$ an embedded closed semi-algebraic manifold of dimension $n-1\geq p\geq 2$ and introduce sufficient conditions for the existence of a fibration structure (generalized open book structure) induced by the projection $\frac{F}{\Vert F \Vert}:W\setminus F^{-1}(0)\to S^{p-1}$. Moreover, we show that the well known local and global Milnor fibrations, in the real and complex settings, follow as a byproduct by considering $W$ as spheres of small and big radii, respectively. Furthermore, we consider the composition mapping of $F$ with the canonical projection $\pi: \mathbb{R}^{p} \to \mathbb{R}^{p-1}$ and prove that the fibers of $\frac{F}{\Vert F \Vert}$ and $\frac{\pi\circ F}{\Vert \pi\circ F \Vert}$ are homotopy equivalent. We also show several formulae relating the Euler characteristics of the fiber of the projection $\frac{F}{\Vert F \Vert}$ and $W\cap F^{-1}(0).$ Similar formulae are proved for mappings obtained after composition of $F$ with canonical projections.
\end{abstract}
\author{N. Dutertre, R. Ara\'ujo dos Santos, Y. Chen and A. Andrade}

\address{N. Dutertre: Aix-Marseille Universit\'e, CNRS, Centrale Marseille, I2M, UMR 7373,
13453 Marseille, France.}
\email{nicolas.dutertre@univ-amu.fr}

\address{R. N. Ara\'ujo dos Santos, Antonio Andrade and Ying Chen: Departamento de Matem\'{a}tica, Instituto de Ci\^{e}ncias Matem\'{a}ticas e de Computa\c{c}\~{a}o, Universidade de S\~{a}o Paulo - Campus de S\~{a}o Carlos, Caixa Postal 668, 13560-970 S\~ao Carlos, SP, Brazil}
\email{rnonato@icmc.usp.br}
\email{yingchen@icmc.usp.br}
\email{andrade@icmc.usp.br}

\thanks{Mathematics Subject Classification (2010): 14P10, 32S55, 58K15, 58K65 \\
Keywords: Open book structure, Semi-algebraic submanifolds, Euler characteristic, real Milnor fibrations}

\maketitle
\section{Introduction}
For a real analytic mapping germ with isolated critical point at origin $\varphi:(\mathbb{R}^{n},0)\to (\mathbb{R}^{p},0)$, Milnor proved in \cite{Mi} that for all $\epsilon>0$ small enough there exists a smooth locally trivial fibration $S_{\epsilon}^{n-1}\setminus K_{\epsilon}\to S^{p-1},$ where $K_{\epsilon}:=\varphi^{-1}(0)\cap S_{\epsilon}^{n-1},$ and pointed out that the map $\frac{\varphi}{\|\varphi\|}$ may fail to be the projection of such fibration since the singular locus $\Sigma (\frac{\varphi}{\|\varphi\|})$ may have a curve inside $B_{\epsilon} \setminus \varphi^{-1}(0)$ which accumulates at the origin. See \cite{Mi}, page 100, for details.

\vspace{0.2cm}

For germs with non isolated singularity, the existence of a fibration where the canonical projection $\frac{\varphi}{\|\varphi\|}$ extends to spheres was studied in \cite{AT, ACT1, CSS} and called {\it (Singular) Higher open book structures on spheres}.

\vspace{0.2cm}

In the global setting, N\'emethi and Zaharia in \cite{NZ} used Milnor's method to show that if a polynomial function $f: \mathbb{C}^{n}\to \mathbb{C}$ satisfies the semitame condition, then the projection $\displaystyle{\frac{f}{\|f\|}:S_{R}^{2n-1}\setminus K_{R}\to S^1}$ is a locally trivial fibration for all $R$ big enough. In \cite{ACT2} the authors generalized the results of N\'emethi and Zaharia introducing a kind of Milnor's conditions at infinity for a class of mappings which contains complex semitame functions, several kind of real polynomials maps and meromorphic functions. In fact, under such conditions they proved the existence of a ``Singular open book structure at infinity". It means that, given a polynomial map $F:\mathbb{R}^{n}\to \mathbb{R}^{p},$ $n>p\geq 2,$ satisfying Conditions (a) and (b) (see Section $1$ for details), there exists $R_{0}\gg 1$ such that, for all $R\geq R_{0},$ the projection map
$$\frac{F}{\|F\|}:S_{R}^{n-1}\setminus K_{R}\to S^{p-1}$$
is a smooth locally trivial fibration, where $K_{R}=F^{-1}(0)\cap S_{R}^{n-1}$ is called the link at infinity.

Some examples of such open book structures are given by the mixed polynomial functions, which are real polynomial mappings from $\mathbb{C}^{n}$ to $\mathbb{C}$ in the complex variables and their conjugates (see \cite{Oka} in local setting and \cite{Ch} in the global setting).

\vspace{0.2cm}

In this paper we prove a general fibration theorem (Theorem \ref{t:fibration}) for a more general class of maps and sets. For this we consider a $C^2$ semi-algebraic map $F=(f_1,\ldots,f_p): \mathbb{R}^N \rightarrow \mathbb{R}^p$ and $W\hookrightarrow \mathbb{R}^N$ an embedded compact semi-algebraic manifold without boundary of dimension $n-1\geq p$. We introduce sufficient conditions that ensure the existence of an open book structure on $W$ induced by the projection $\frac{F}{\Vert F \Vert}$. As a consequence we extend both fibration structures: the local and global ones in the real and complex settings.

In another direction, we investigate the composition mapping of $F$ with the canonical projection $\pi: \mathbb{R}^{p} \to \mathbb{R}^{p-1}$. In the local setting this problem was approached in \cite{DA} for the so-called Milnor's tube fibration and it was shown that the canonical projection $\pi$ does not change the homotopy type of the fiber. In the present paper we prove that an analogous result holds true for the general fibration $\displaystyle{\frac{F}{\|F \|}: W\setminus F^{-1}(0) \rightarrow  S^{p-1}}.$ Moreover, it is shown that the Euler characteristics of the fiber of the projection $\frac{F}{\Vert F \Vert}$ and $W\cap F^{-1}(0)$ are closely related, as a consequence, it follows that the Euler characteristic of the fiber is a natural obstruction for the sets $W\cap F^{-1}(0)$ and $W\cap (\pi\circ F)^{-1}(0)$ to be homotopy equivalent. See Proposition \ref{p:Euler}.

\vspace{0.3cm}

The structure of the paper is as follows. Section $2$ is devoted to establish a general fibration theorem for $C^2$ semi-algebraic mappings. In Section $3$, we consider the composition of the mapping $F$ with the canonical projection and show that the respective fibers are homotopy equivalent. In Section $4,$ we relate the Euler characteristics of the fibers and the relative links. In the last section, we explain how to relate our results to the higher open book structure for polynomial mappings in the local and global settings and calculate some examples.

\section{Fibration structure} \label{section 2}

\subsection{Fibration theorem}

Let $W\hookrightarrow \mathbb{R}^N$ be a smooth, compact semi-algebraic $(n-1)-$dimensional submanifold embedded in $\mathbb{R}^N.$ Let $F=(f_1,\ldots,f_p): \mathbb{R}^N \rightarrow \mathbb{R}^p$, $2  \le p \le n-1$, be a $C^2$ semi-algebraic map, $V(F) = F^{-1}(0)$ and $V_W(F)=V(F)\cap W$. We denote by $\bar{ F}: \mathbb{R}^N \setminus V(F) \rightarrow S^{p-1}$ the projection $\frac{F}{\Vert F \Vert}$ where $\|F\|=\sqrt{f_{1}^{2}+\cdots +f_{p}^{2}}.$ We also denote by:

\begin{itemize}
  \item $\Sigma_F$ the set of critical points of $F,$
	\vspace{0.2cm}
	
  \item $\Sigma_{\bar{F}}$ the set of critical points of $\bar{F},$
	\vspace{0.2cm}
	
  \item $\Sigma_F^W$ the set of critical points of $F_{\vert W},$
	\vspace{0.2cm}
	
  \item $\Sigma_{\bar{F}}^W$ the set of critical points of $\bar{F}_{\vert W}.$
\end{itemize}

\vspace{0.2cm}

It follows by definition (see also Remark \ref{re:generators} and Lemma \ref{matrix} below) that $\Sigma_{\bar{F}}$ is a subset of $\Sigma_F $ and $\Sigma_{\bar{F}}^W$ is a subset of $\Sigma_F^W$.

\vspace{0.2cm}

We are interested in studying the existence of the locally trivial fibration
\begin{equation}\label{eq:openbook}
\bar{F}:  W\setminus V_W(F)   \rightarrow  S^{p-1}.
\end{equation}
We call such fibration the {\it fibration structure induced by
$\bar{F}$}.

\vspace{0.2cm}

We should point out that Conditions (a) and (b) and the proof of Theorem \ref{t:fibration} below follow closely the one stated in \cite{AT, ACT1, ACT2}, which in turn were inspired by Massey's conditions introduced in \cite{DM}. The idea of our result is to show how to adapt them in a more general setting. For this, consider the following conditions:

\vspace{0.2cm}

\begin{itemize}
\item[(a)] $\overline{\Sigma_F^W \setminus V_W(F)} \cap V_W(F)= \emptyset$,
\item[(b)] $\Sigma_{\bar{F}}^W = \emptyset$.
\end{itemize}

\vspace{0.2cm}


Without lost of generality, we may assume $W$ connected.

\begin{theorem}\label{t:fibration} Under Condition (a), the following statements are equivalent:

\vspace{0.2cm}

 \begin{itemize}
\item[(i)] $\bar{F}:  W\setminus V_W(F) \rightarrow  S^{p-1}$ is a $C^{2}$ locally trivial fibration,

\vspace{0.3cm}

\item[(ii)] Condition (b) holds.
\end{itemize}
\end{theorem}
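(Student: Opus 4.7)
The plan is to treat the two implications separately, the first being essentially a definition check and the second being the substantive direction.

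\medskip

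For (i) $\Rightarrow$ (ii), a $C^2$ locally trivial fibration is, in particular, a $C^2$ submersion at every point of its domain; so every $x \in W \setminus V_W(F)$ is a regular point of $\bar{F}|_W$, i.e.\ $\Sigma_{\bar{F}}^W = \emptyset$. This direction does not even need Condition (a).

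\medskip

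For (ii) $\Rightarrow$ (i), the plan is the classical Milnor--Massey--Araújo--Tibăr approach: build, over a sufficiently small open set $U \subset S^{p-1}$, a smooth lift of a trivializing frame of $TS^{p-1}|_U$ to a frame of vector fields on $\bar{F}^{-1}(U) \cap (W \setminus V_W(F))$ whose flows are complete, so that integrating them yields a diffeomorphism $\bar{F}^{-1}(U) \cap (W \setminus V_W(F)) \cong U \times \bar{F}^{-1}(q_0)$. Concretely, pick $q_0 \in U$ and a frame $v_1, \ldots, v_{p-1}$ of $TS^{p-1}|_U$. Condition (a), together with compactness of $W$ and semi-algebraicity (or the curve selection lemma), yields $\delta_0 > 0$ such that the Milnor tube
\[
T_{\delta_0} := W \cap \{ 0 < \|F\| \le \delta_0 \}
\]
contains no critical point of $F|_W$; thus $F|_W$ is a submersion on $T_{\delta_0}$. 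On $T_{\delta_0}$, one can lift each $v_i$ via the surjection $dF|_W$ to a vector field $\tilde{v}_i^{\mathrm{in}}$ that is moreover tangent to the level sets of $\|F\|$ (by projecting any lift onto the kernel of $d\|F\|$, which is transverse inside $\ker dF|_W^\perp$). On the compact piece $W \cap \{\|F\| \ge \delta_0/2\}$, Condition (b) gives that $\bar{F}|_W$ is a submersion, so each $v_i$ admits a smooth local lift $\tilde{v}_i^{\mathrm{out}}$. Splice $\tilde{v}_i^{\mathrm{in}}$ and $\tilde{v}_i^{\mathrm{out}}$ by a partition of unity subordinate to $\{\|F\| < \delta_0\}$ and $\{\|F\| > \delta_0/2\}$ to obtain a global lift $\tilde{v}_i$ on $\bar{F}^{-1}(U) \cap (W \setminus V_W(F))$; the resulting vector field still satisfies $d\bar{F}(\tilde{v}_i) = v_i$ because taking convex combinations of lifts preserves this property.

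\medskip

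The final step is to verify that the integral curves of each $\tilde{v}_i$ are defined for all time. Outside the tube the curves lie in the compact manifold $W \cap \{\|F\| \ge \delta_0/2\}$, so completeness is automatic. Inside the tube, the choice that $\tilde{v}_i^{\mathrm{in}}$ is tangent to levels of $\|F\|$ forces the integral curves to stay on a fixed level set of $\|F\|$; as these level sets are closed semi-algebraic subsets of the compact $W$, they are compact, and completeness again follows. The commuting flows of $\tilde{v}_1, \ldots, \tilde{v}_{p-1}$ then define the required trivialization of $\bar{F}$ over $U$, and covering $S^{p-1}$ by such $U$'s yields the locally trivial fibration. The main obstacle I anticipate is the splicing step at the interface $\|F\| \approx \delta_0$: one must make sure that the partition-of-unity combination of $\tilde{v}_i^{\mathrm{in}}$ and $\tilde{v}_i^{\mathrm{out}}$ does not drift away from the tube and still produces bounded orbits; this is why it is essential to make the \emph{inside} lift tangent to $\{\|F\| = \mathrm{const}\}$ while the outside lift is controlled only by compactness. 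Verifying that both Conditions (a) and (b) are simultaneously used precisely at this interface (and not elsewhere) is the delicate geometric core of the argument.
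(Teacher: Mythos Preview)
Your (i)$\Rightarrow$(ii) matches the paper. For (ii)$\Rightarrow$(i) the paper takes a cleaner, more modular route than your vector-field construction: it applies Ehresmann's theorem twice and glues. Condition (a) yields $\delta>0$ such that $F_{\vert}: W\cap F^{-1}(D_\delta\setminus\{0\}) \to D_\delta\setminus\{0\}$ is a proper $C^2$ submersion, hence a fibration; composing with the radial projection $D_\delta\setminus\{0\}\to S^{p-1}$ makes $\bar F$ a fibration over the tube. On the complement $W\setminus F^{-1}(\mathring D_\delta)$, which is compact, Condition (b) makes $\bar F$ a proper submersion, so Ehresmann applies again. The two pieces are glued along $W\cap F^{-1}(S_\delta)$. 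No flows, no partitions of unity, no completeness analysis. Your approach is the ``complete Ehresmann connection'' proof and is valid in spirit, but buys you nothing here since $W$ is compact and Ehresmann is available off the shelf.

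Two points in your write-up need fixing. First, ``the commuting flows of $\tilde v_1,\ldots,\tilde v_{p-1}$'' is unjustified: lifts of commuting vector fields need not commute. You should either iterate (trivialize one coordinate direction at a time) or simply invoke the standard fact that a surjective submersion with complete horizontal path-lifting is a locally trivial fibration. Second, you never address surjectivity of $\bar F$ onto $S^{p-1}$; the paper gets it from the tube fibration (or, when $V_W(F)=\emptyset$, from properness of $\bar F$ on the compact $W$ and connectedness of $S^{p-1}$), while in your framework it follows from path-lifting plus connectedness once completeness is in hand. Incidentally, the interface issue you flag as the ``delicate geometric core'' is not delicate at all: every level set $\{\|F\|=c\}$ with $c\le\delta_0/2$ is invariant under your spliced field, so $\|F\|$ along any integral curve is bounded below by $\min(\|F(\gamma(0))\|,\delta_0/2)>0$ and the curve cannot approach $V_W(F)$.
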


\begin{proof}
\noindent
Firstly, see that Condition (b) holds if and only if:
$$\bar{F}:  W\setminus V_W(F)\rightarrow  S^{p-1}$$
is a submersion.

By the Curve Selection Lemma, Condition $(a)$ means that:
there exists $\delta>0$ small enough and a closed disc $D_\delta$ centered at origin such that
$$ F_{|}: F^{-1}(D_\delta-\{0\})\cap W \rightarrow D_\delta-\{0\}$$
is a $C^2$ surjective proper submersion.

\vspace{0.2cm}

Let us prove the implication (i) $\Rightarrow$ (ii). If we suppose that $\bar{F}: W\setminus V_W(F) \rightarrow  S^{p-1}$ is a locally trivial fibration, then it is a submersion and so Condition (b) follows.

\vspace{0.2cm}

Let us prove the converse. We can assume that $V_W(F)$ is not empty: in fact, in the case it is empty, Condition (a) is always satisfied and by Condition (b), we have that $\bar{F}: W \rightarrow S^{p-1}$ is a $C^{2}$ proper submersion and so onto, since $S^{p-1}$ is connected. Hence (i) follows by Ehresmann's theorem.

In the case $V_W(F)$ is not empty, the projection $\bar{F}: W\setminus V_W(F)\rightarrow S^{p-1}$ is not proper and we can not use Ehresmann's theorem. Nevertheless, the mapping $ F_{|}: F^{-1}(D_\delta-\{0\})\cap W \rightarrow D_\delta-\{0\}$ is a
surjective proper submersion. So, by Ehresmann's theorem, the mapping
\begin{equation}\label{eq:fib}
F_{|}: F^{-1}(D_\delta-\{0\})\cap W \rightarrow D_\delta-\{0\}
\end{equation}
is a locally trivial fibration. Now, we can compose it with the radial projection $\pi_{1}: D_\delta-\{0\} \rightarrow S^{p-1}, $ $\pi_{1} (y)=\frac{y}{\|y\|},$ to get that
\begin{equation}\label{eq:fib1}
\bar{F}: F^{-1}(D_\delta-\{0\})\cap W \rightarrow S^{p-1}
\end{equation}
is a locally trivial fibration.

Fibration (\ref{eq:fib}) yields that the map

\begin{equation}\label{eq:fib 3}
\bar{F}: F^{-1}(S_\delta)\cap W \rightarrow S^{p-1}
\end{equation}
is also a locally trivial fibration and therefore surjective.

This implies that
\begin{equation}\label{eq:fib2}
\bar{F}:  W\setminus F^{-1}(\mathring{D}_\delta)\rightarrow  S^{p-1}
\end{equation}
is surjective and proper, where $\mathring{D}_\delta$ denotes the open disk. So, by using Condition (b), the mapping $\bar{F}$ is a $C^{2}$ submersion. Therefore, we have a locally trivial fibration by Ehresmann's theorem.

Now we can glue fibrations \eqref{eq:fib1} and \eqref{eq:fib2} along the common boundary $F^{-1}(S_\delta)\cap W,$ using fibration (\ref{eq:fib 3}), to get the locally trivial fibration (1). \end{proof}

\begin{remark}

\begin{enumerate}

\item  The main argument in the proof is based on Ehresmann's theorem. Therefore, it is possible to generalize Conditions (a) and (b) above and Theorem \ref{t:fibration} for $C^2$ mappings and manifolds not necessarily in the semi-algebraic category.

\item It is possible to change the submanifold $W$ by a compact semi-algebraic set with non-empty singular locus $\Sigma(W)$ if one just asks that $\Sigma(W)\subseteq V_W(F).$ Alternatively, one can change the compactness condition on $W$ by asking $F$ to be proper on $W$. In both cases, the statement of Theorem \ref{t:fibration} and its proof work in the same way.

\end{enumerate}
\end{remark}

\section{Canonical projections and fibers}

In this section we study the behaviour of Conditions (a) and (b) after considering composition of $F$ with canonical projections. The idea was inspired by the studies developed in \cite[Theorem 6.2]{DA}, where it was shown that for real analytic mapping germs with isolated critical values, under the so-called Milnor's conditions (a) and (b) introduced by D. Massey in \cite{DM}, the canonical projection preserves the homotopy type of the Milnor fiber.

From now on, we follow the notations used in section \ref{section 2}. The following statement is an important remark which provides a way to understand the generators of the normal space to the fibers of $\bar{F}: \mathbb{R}^N \setminus V(F) \rightarrow S^{p-1}.$
\begin{remark}\label{re:generators}
Let $\pi_i : \mathbb{R}^p \rightarrow \mathbb{R}^{p-1}$ be the projection $$(y_1,\ldots,y_p) \mapsto (y_1, \ldots, \hat{y_i}, \ldots, y_p).$$
The restriction of $\pi_{i}\circ\bar{F}$ on the open set $\{f_i \not= 0 \}$ is given by
$$\begin{array}{ccccc}
\pi_ i \circ \bar{F} & : & \mathbb{R}^N \setminus V(F)  & \rightarrow  & B^{p-1} \subset \mathbb{R}^{p-1} \cr 
   &   &  x  & \mapsto & (\bar{f_1}(x), \ldots, \widehat{\bar{f_i}(x)}, \ldots, \bar{f_p}(x) ), \cr
\end{array}$$
where $\bar{f_i}$ denotes the function $\frac{f_i}{\Vert F \Vert}$ for $i=1,\ldots,p$. Moreover we have that $\Sigma_{\bar{F}}\cap \{f_{i} \not= 0\}$ is the set of points where
$${\rm rank } \left[ \begin{array}{c}
\nabla \bar{f_1} \cr
\vdots \cr
\nabla \bar{f}_{i-1} \cr
\nabla \bar{f}_{i+1} \cr
\vdots \cr
\nabla \bar{f_{p}} \cr
\end{array}
\right] < p-1.$$
\end{remark}
Let us see below a way to make these generators explicit. As in \cite{AT, ACT2}, for each $1\leq i,j \leq p,$ let us denote by
$$\omega_{i,j}(x)=f_{i}(x)\nabla f_{j}(x)-f_{j}(x)\nabla f_{i}(x).$$
It follows that $\omega_{i,i}(x) = 0$ and $\omega_{j,i}(x)=-\omega_{i,j}(x).$ Calculations show that for each $1\leq j \leq p,$


\begin{equation}\label{eq:projection}
\nabla(\frac{f_{j}}{\|F\|})(x)=\frac{1}{\|F\|^{3}}\sum_{1\leq k\neq j}^{p} f_{k}(x)\omega_{k,j}(x).\hspace{1cm}
\end{equation}

It means that, if $x$ is not a singular point of $\bar{F}$, then the set of vectors $\{\omega_{i,j}(x)\}_{1\leq i<j\leq p}$ span the normal space in $\mathbb{R}^{n}\setminus V(F)$ to the fiber $\bar{F}^{-1}(\bar{F}(x))$ at $x.$ It is also easy to check that the following circular relation holds.

\begin{lemma}\label{lem:eqmilnor}
 For all $1\leq i<j<k\leq p$ we have:

$$\displaystyle{f_{i}\omega_{j,k}+f_{k}\omega_{i,j}+f_{j}\omega_{k,i}=0}.\hspace{1cm}(2)$$

\end{lemma}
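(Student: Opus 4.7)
The plan is to prove the identity by direct expansion, since $\omega_{a,b}$ is defined by a simple bilinear formula in the $f$'s and their gradients. The only tool needed is the definition $\omega_{a,b}(x)=f_{a}(x)\nabla f_{b}(x)-f_{b}(x)\nabla f_{a}(x)$ together with the skew-symmetry $\omega_{b,a}=-\omega_{a,b}$, which is immediate from that definition.

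First, I would substitute the definition into each of the three summands, obtaining
\begin{equation*}
f_{i}\omega_{j,k}=f_{i}f_{j}\nabla f_{k}-f_{i}f_{k}\nabla f_{j},\quad f_{k}\omega_{i,j}=f_{k}f_{i}\nabla f_{j}-f_{k}f_{j}\nabla f_{i},\quad f_{j}\omega_{k,i}=f_{j}f_{k}\nabla f_{i}-f_{j}f_{i}\nabla f_{k}.
\end{equation*}
Then I would add the three expressions and group them by the gradient that appears. The coefficient of $\nabla f_{k}$ is $f_{i}f_{j}-f_{j}f_{i}$, the coefficient of $\nabla f_{j}$ is $-f_{i}f_{k}+f_{k}f_{i}$, and the coefficient of $\nabla f_{i}$ is $-f_{k}f_{j}+f_{j}f_{k}$. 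Each of these vanishes by the commutativity of multiplication in $\mathbb{R}$, so the total sum is the zero vector in $\mathbb{R}^{N}$.

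There is no real obstacle; the statement is a purely algebraic identity in the ring generated by the functions $f_{\ell}$ and the formal symbols $\nabla f_{\ell}$, and it is in fact a standard Plücker-type relation, entirely analogous to the identity $a(b\wedge c)+c(a\wedge b)+b(c\wedge a)=0$ for $2$-forms that it imitates. The ordering assumption $i<j<k$ plays no role in the calculation itself (one uses only the skew-symmetry to write $\omega_{k,i}=-\omega_{i,k}$ if one prefers); the ordering is there only to avoid redundancy when this identity is later invoked as a linear dependence among the $\omega_{a,b}$.
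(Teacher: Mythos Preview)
Your proof is correct and is essentially the same as the paper's own argument, which simply says ``It follows by definition''; you have merely written out explicitly the cancellation that the paper leaves to the reader.
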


\proof It follows by definition. \endproof

Since this relation contains the main information concerning the normal space to the fiber, we call it {\bf Milnor's equality}.

\begin{lemma} \label{matrix}

Given $F$ as above, for all $x$ in the open set $\{f_{1}(x)\neq 0\}$, we have the following matrix equation:

$$ \|F(x)\|^{3p}\cdot\begin{pmatrix}
\displaystyle{\nabla (\frac{f_{1}}{\|F\|})(x)} \\
\displaystyle{\nabla (\frac{f_{2}}{\|F\|})(x)} \\
\ldots \\
\displaystyle{\nabla (\frac{f_{p}}{\|F\|})(x)}
\end{pmatrix} _{p\times n} =$$

$$\hspace{-1cm} \begin{pmatrix}
-f_{2} & -f_{3} & \ldots & -f_{p-1} & -f_{p}\\
\frac{f_{1}^{2}+f_{3}^{2}+\cdots +f_{p}^{2}}{f_{1}} & -\frac{f_{2}f_{3}}{f_{1}} & \ldots & -\frac{f_{2}f_{p-1}}{f_{1}} & -\frac{f_{2}f_{p}}{f_{1}}\\
-\frac{f_{2}f_{3}}{f_{1}} & \frac{f_{1}^{2}+f_{2}^{2}+\cdots +f_{p-1}^{2}+f_{p}^{2}}{f_{1}} & \ldots & -\frac{f_{3}f_{p-1}}{f_{1}} & -\frac{f_{3}f_{p}}{f_{1}}\\
\cdots & \cdots & \cdots & \ldots & \cdots & \\
-\frac{f_{2}f_{p}}{f_{1}} & -\frac{f_{3}f_{p}}{f_{1}} & \ldots & -\frac{f_{p-1}f_{p}}{f_{1}} & \frac{f_{1}^{2}+f_{2}^{2}+\cdots +f_{p-1}^{2}}{f_{1}}
\end{pmatrix}_{p\times (p-1)}
\begin{pmatrix}
\omega_{1,2}(x) \\
\omega_{1,3}(x)\\
\ldots \\
\omega_{1,p}(x)
\end{pmatrix}_{(p-1)\times n},
$$
where the $p\times (p-1)$ matrix has maximal rank.

\end{lemma}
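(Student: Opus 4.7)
The plan is to derive the matrix equation row by row from identity (1), using Milnor's equality to re-express every $\omega_{i,j}$ in terms of the vectors $\omega_{1,2},\ldots,\omega_{1,p}$, and then to verify the rank claim by computing a single determinant. (I also note that the factor $\|F\|^{3p}$ on the left-hand side appears to be a typographical slip for $\|F\|^{3}$, which is what formula (1) produces after clearing the denominator; my computation targets that scaling.)

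For the top row ($j=1$), formula (1) gives
\[
\|F\|^{3}\nabla\bar{f}_{1}\;=\;\sum_{k\neq 1}f_{k}\,\omega_{k,1}\;=\;-\sum_{k=2}^{p}f_{k}\,\omega_{1,k},
\]
using $\omega_{k,1}=-\omega_{1,k}$, which reproduces the first row of the claimed matrix. For $j\geq 2$, I split $\|F\|^{3}\nabla\bar{f}_{j}=f_{1}\omega_{1,j}+\sum_{k\neq 1,j}f_{k}\omega_{k,j}$. On the open set $\{f_{1}\neq 0\}$, Milnor's equality applied to the triple $\{1,j,k\}$ yields $\omega_{j,k}=\frac{1}{f_{1}}(f_{j}\omega_{1,k}-f_{k}\omega_{1,j})$. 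Substituting $\omega_{k,j}=-\omega_{j,k}$ and collecting the coefficients of $\omega_{1,j}$ and of each $\omega_{1,k}$ with $k\neq 1,j$ gives
\[
\|F\|^{3}\nabla\bar{f}_{j}\;=\;\frac{f_{1}^{2}+\sum_{k\neq 1,j}f_{k}^{2}}{f_{1}}\,\omega_{1,j}\;-\;\frac{f_{j}}{f_{1}}\sum_{k\neq 1,j}f_{k}\,\omega_{1,k},
\]
which is precisely the $j$-th row of the matrix.

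To settle the rank assertion, I examine the lower $(p-1)\times(p-1)$ block $M$ (rows indexed by $j=2,\ldots,p$), whose entries can be written uniformly as $M_{jk}=\frac{1}{f_{1}}\bigl(\|F\|^{2}\delta_{jk}-f_{j}f_{k}\bigr)$ for $j,k\in\{2,\ldots,p\}$. Setting $v=(f_{2},\ldots,f_{p})^{T}$, this is $M=\frac{1}{f_{1}}\bigl(\|F\|^{2}I_{p-1}-vv^{T}\bigr)$, a rank-one perturbation of a multiple of the identity. The matrix determinant lemma then gives
\[
\det M\;=\;\frac{\|F\|^{2(p-2)}}{f_{1}^{p-1}}\bigl(\|F\|^{2}-\|v\|^{2}\bigr)\;=\;\frac{\|F\|^{2(p-2)}f_{1}^{2}}{f_{1}^{p-1}},
\]
since $\|F\|^{2}-\|v\|^{2}=f_{1}^{2}$. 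This is nonzero whenever $f_{1}\neq 0$ (which in particular forces $\|F\|\neq 0$), so $M$ is invertible and the full $p\times(p-1)$ matrix has maximal rank $p-1$.

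The main obstacle I anticipate is the sign and index bookkeeping in the substitution step: one must carefully combine $\omega_{k,j}=-\omega_{j,k}$ with Milnor's equality so that the diagonal coefficient consolidates to $(f_{1}^{2}+\sum_{k\neq 1,j}f_{k}^{2})/f_{1}$ and the off-diagonal ones to $-f_{j}f_{k}/f_{1}$. Once the block $M$ is recognized as a rank-one update of a scalar identity, the determinant computation and hence the rank conclusion are immediate.
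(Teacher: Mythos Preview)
Your proof is correct and follows the paper's overall strategy: derive the matrix identity from formula~(1) combined with Milnor's equality, then establish the rank by showing the lower $(p-1)\times(p-1)$ block has nonzero determinant. The paper is terser on the first part (it simply says the identity follows from~(1) and Lemma~\ref{lem:eqmilnor}), so your explicit row-by-row verification is a welcome expansion; your observation that the scalar on the left should be $\|F\|^{3}$ rather than $\|F\|^{3p}$ is also correct.

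Where you genuinely diverge is in the rank argument. The paper notes that the block $A$ is symmetric, hence diagonalizable, and identifies its eigenvalues as $\|F\|^{2}/f_{1}$ with multiplicity $p-2$ and $f_{1}$ with multiplicity $1$, giving $\det A=\|F\|^{2(p-2)}/f_{1}^{p-3}$. You instead recognize $A=\frac{1}{f_{1}}\bigl(\|F\|^{2}I_{p-1}-vv^{T}\bigr)$ as a rank-one perturbation of a scalar multiple of the identity and apply the matrix determinant lemma. Your value $\|F\|^{2(p-2)}f_{1}^{2}/f_{1}^{p-1}$ simplifies to the paper's $\|F\|^{2(p-2)}/f_{1}^{p-3}$, so the two agree. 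Your route is arguably cleaner, since it bypasses the need to exhibit eigenspaces; the paper's route, on the other hand, yields the full spectral decomposition as a byproduct.
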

\proof The matrix equation follows from relation \eqref{eq:projection} and Lemma \ref{lem:eqmilnor}.

\vspace{0.3cm}

Now consider the $(p-1)\times (p-1)$ submatrix, let us say $A(x)$, built by removing the first line in the $p\times (p-1)$ matrix above:

$$A(x)=\begin{pmatrix}
\frac{f_{1}^{2}+f_{3}^{2}+\cdots +f_{p}^{2}}{f_{1}} & -\frac{f_{2}f_{3}}{f_{1}} & -\frac{f_{2}f_{4}}{f_{1}} & \ldots & -\frac{f_{2}f_{p}}{f_{1}}\\
-\frac{f_{2}f_{3}}{f_{1}} & \frac{f_{1}^{2}+f_{2}^{2}+\cdots +f_{p-1}^{2}+f_{p}^{2}}{f_{1}} & -\frac{f_{3}f_{4}}{f_{1}} & \ldots & -\frac{f_{3}f_{p}}{f_{1}}\\
\cdots & \cdots & \cdots & \ldots & \cdots & \\
-\frac{f_{2}f_{p}}{f_{1}} & -\frac{f_{3}f_{p}}{f_{1}} & -\frac{f_{4}f_{p}}{f_{1}} & \ldots & \frac{f_{1}^{2}+f_{2}^{2}+\cdots +f_{p-1}^{2}}{f_{1}}
\end{pmatrix}.$$

Since it is a symmetric matrix it is diagonalizable, then the geometric and algebraic multiplicity are the same. Now, by linear algebra calculations one can prove that $\displaystyle{\frac{\|F(x)\|^{2}}{f_{1}(x)}}$ is an eigenvalue with multiplicity $(p-2)$ and $f_{1}(x)$ is an eigenvalue with multiplicity one. So, the determinant of $A(x)$ is equal to $\dfrac{\|F (x)\|^{2(p-2)}}{f_{1}^{p-3}(x)}$ and never vanishes on the open set $\{f_{1}(x)\neq 0\}.$
 Therefore, the linear operator $A(x)$ is injective and the ranks of the $p \times n$ matrix on the left side and the $(p-1)\times n$ matrix on the right side are the same. \endproof

\begin{remark}

The above lemma still holds true with the same proof, if one considers for each $i=2,\cdots, p,$ the point $x$ in the open set $f_{i}(x)\neq 0.$ In this case, the $i$-th line of the $p\times (p-1)$ matrix is given by 

$$
\left\{\begin{array}{c}
a_{ik} = -f_{k}, \mbox{ if }  k<i,\\\\
a_{ik} = -f_{k+1}, \mbox{ if }  k\geq i.\\\\
\end{array}\right.
$$

Now, removing this line from the matrix, one obtains a $(p-1)\times (p-1)$ symmetric matrix such that the elements in the diagonal are given by 

$$
\left\{\begin{array}{c}
a_{jj} = \displaystyle \sum_{1\leq k \neq j}^{p} \frac{f^{2}_{k}}{f_{i}}, \mbox{ if } j<i, \\\\
a_{jj} = \displaystyle \sum_{1\leq k \neq j+1}^{p} \frac{f^{2}_{k}}{f_{i}}, \mbox{ if } j\geq i,\\\\
\end{array}\right.
$$

and the elements above the diagonal are given by

$$
\left\{\begin{array}{c}
a_{lk} = -\frac{f_{l}f_{k}}{f_{i}}, \mbox{ if } k<i, \\\\
a_{lk} = -\frac{f_{l}f_{k+1}}{f_{i}}, \mbox{ if } k\geq i. \\\\
\end{array}\right.
$$

\noindent The $(p-1)\times n$ matrix in this case takes the form

$$\begin{pmatrix}
\omega_{i,1}\\
\omega_{i,2}\\
\vdots \\
\omega_{i,i-1}\\
\omega_{i,i+1}\\
\vdots \\
\omega_{i,p}
\end{pmatrix}_{(p-1) \times n}.
$$
\end{remark}

\subsection{\textbf{Case $p\ge 3$}}
We assume that $p\ge 3$ and consider the canonical projection $\pi: \mathbb{R}^{p} \to \mathbb{R}^{p-1}$, $\pi(y_1,\ldots,y_p)=(y_1, \ldots, {y_i}, \ldots, y_{p-1})$. We denote the composition mapping $\pi\circ F$ by $G=(f_1,\ldots,f_{p-1})$ and as before $\bar{G}=\frac{G}{\|G\|}.$

\begin{lemma}\label{lem:fib}
If $F$ satisfies Conditions (a) and (b), then $G=(f_1,\ldots,f_{p-1})$ also satisfies Conditions (a) and (b).
\end{lemma}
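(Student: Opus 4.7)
The plan is to verify Conditions (a) and (b) for $G$ separately: (b) will follow from the algebraic rank description of $\Sigma_{\bar F}$ and $\Sigma_{\bar G}$ provided by Lemma \ref{matrix} and Remark \ref{re:generators}, while (a) requires a case split depending on whether a hypothetical point of $V_W(G)$ already belongs to $V_W(F)$.

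For Condition (b), let $x \in W \setminus V_W(G)$. Since $V_W(F) \subseteq V_W(G)$, also $x \in W \setminus V_W(F)$, and there exists $i \in \{1,\ldots,p-1\}$ with $f_i(x) \neq 0$. Applying the matrix identity of Lemma \ref{matrix} separately to $G$ (which has $p-1$ components) and to $F$, and restricting all gradients to $T_x W$, one obtains the characterizations
\[
x \in \Sigma_{\bar G}^W \iff \mathrm{rank}\bigl\{\omega_{i,j}(x)|_{T_x W} : j \neq i,\ 1 \le j \le p-1\bigr\} < p-2,
\]
\[
x \in \Sigma_{\bar F}^W \iff \mathrm{rank}\bigl\{\omega_{i,j}(x)|_{T_x W} : j \neq i,\ 1 \le j \le p\bigr\} < p-1.
\]
Since adjoining the extra vector $\omega_{i,p}(x)|_{T_x W}$ can raise the rank by at most one, the first inequality forces the second. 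Hence $\Sigma_{\bar G}^W \subseteq \Sigma_{\bar F}^W = \emptyset$ by Condition (b) for $F$.

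For Condition (a), suppose for contradiction that $x \in \overline{\Sigma_G^W \setminus V_W(G)} \cap V_W(G)$, and use the Curve Selection Lemma to produce a semi-algebraic arc $\gamma:[0,\epsilon)\to W$ with $\gamma(0)=x$ and $\gamma(t)\in\Sigma_G^W\setminus V_W(G)$ for $t>0$. Observe that $\Sigma_G^W \subseteq \Sigma_F^W$, since the Jacobian of $G|_W$ is obtained from the Jacobian of $F|_W$ by deleting the last row. If $x \in V_W(F)$, then using $V_W(F) \subseteq V_W(G)$ we get $\gamma(t) \in \Sigma_F^W \setminus V_W(F)$, which contradicts Condition (a) for $F$.

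The remaining case $x \in V_W(G) \setminus V_W(F)$ is the main obstacle, since neither (a) nor (b) for $F$ applies directly. Here $f_p(x) \neq 0$, so on a neighborhood $U$ of $x$ in $W$ the function $f_p$ keeps constant sign and $\Phi := G/f_p : U \to \mathbb{R}^{p-1}$ is a well-defined $C^2$ map with $\Phi(x)=0$. I expect $\Phi$ to be a submersion at $x$: since $\bar F(x) = \pm e_p$ is a pole of $S^{p-1}$, the projection $\pi$ restricts to a diffeomorphism between the hemisphere $\{\pm y_p>0\}$ and the open ball $B^{p-1}\subset\mathbb{R}^{p-1}$, and the map $u\mapsto u/\sqrt{1-\|u\|^2}$ is a diffeomorphism $B^{p-1}\to\mathbb{R}^{p-1}$ that identifies $\pi\circ\bar F|_U$ with $\Phi$; by Condition (b) for $F$, $\bar F$ is a submersion on $U\subseteq W\setminus V_W(F)$, hence so is $\Phi$. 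Differentiating $G = f_p\,\Phi$ at $x$ and using $\Phi(x)=0$ gives $dG(x)|_{T_x W} = f_p(x)\,d\Phi(x)|_{T_x W}$, which has full rank $p-1$. Lower semicontinuity of the rank then produces an open neighborhood of $x$ in $W$ where $\Sigma_G^W$ is empty, contradicting $x \in \overline{\Sigma_G^W\setminus V_W(G)}$.
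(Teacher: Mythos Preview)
Your proof is correct and follows the same overall architecture as the paper's: establish (b) via the rank criterion together with the ``adding one vector raises the rank by at most one'' observation, then handle (a) by splitting according to whether the limit point lies in $V_W(F)$ or in $V_W(G)\setminus V_W(F)$. The only substantive difference is in the second case of (a). The paper observes directly that on $V_W(G)\cap\{f_p\neq 0\}$ one has $f_1=\cdots=f_{p-1}=0$, hence $\nabla\bar f_i = \|F\|^{-1}\nabla f_i$ for $i<p$, which yields $\Sigma_G^W\cap\bigl(V_W(G)\setminus V_W(f_p)\bigr) = \Sigma_{\bar F}^W\cap\bigl(V_W(G)\setminus V_W(f_p)\bigr)=\emptyset$ immediately from Condition~(b). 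Your route via $\Phi=G/f_p$ and the polar chart $u\mapsto u/\sqrt{1-\|u\|^2}$ reaches the same conclusion but is slightly more elaborate; on the other hand it makes the geometry explicit (near the pole $\pm e_p$ of $S^{p-1}$, $\bar F$ being a submersion is exactly $G/f_p$ being a submersion). Your invocation of the Curve Selection Lemma is harmless but unnecessary: both cases use only closures and the elementary inclusions $\Sigma_G^W\subseteq\Sigma_F^W$ and $V_W(F)\subseteq V_W(G)$.
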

\begin{proof}
We prove first that Condition (b) holds for $G$. Assume that there exists $x\in\Sigma^W_{\bar{G}}$. Then, there exists $ i \in \{1,\ldots,p-1\}$ such that $f_i(x) \not= 0$.
 Since $W\subset \mathbb{R}^{N}$ is a smooth semi-algebraic manifold of dimension $n-1$, locally around $x$ we can assume that $W=\{h_1(x)=0,\ldots,h_{N-n+1}(x)=0\}$ where
$h_1,\ldots,h_{N-n+1}$ are smooth semi-algebraic functions and $0$ is a regular value of the mapping $(h_1,\ldots,h_{N-n+1}):\mathbb{R}^{N} \to \mathbb{R}^{N-n+1}$. By Remark \ref{re:generators}, at the point $x$ we have
$${\rm rank } \left[ \begin{array}{c}
\nabla h_1(x) \cr
\vdots \cr
\nabla h_{N-n+1}(x) \cr
\nabla \bar{f_1}(x) \cr
\vdots \cr
\nabla \bar{f}_{i-1}(x) \cr
\nabla \bar{f}_{i+1}(x) \cr
\vdots \cr
\nabla \bar{f}_{p-1}(x) \cr
\end{array}
\right] < N-n+1+p-2,$$
and by adding in the last line the vector $\nabla \bar{f_{p}}(x),$ we get
$${\rm rank } \left[ \begin{array}{c}
\nabla h_1(x) \cr
\vdots \cr
\nabla h_{N-n+1}(x) \cr
\nabla \bar{f_1}(x) \cr
\vdots \cr
\nabla \bar{f}_{i-1}(x) \cr
\nabla \bar{f}_{i+1}(x) \cr
\vdots \cr
\nabla \bar{f_{p}}(x) \cr
\end{array}
\right] < N-n+1+p-1.$$
Hence, $x \in \Sigma^W_{\bar{F}}$ which is in contradiction since $\Sigma_{\bar{F}}^W = \emptyset$.
So, $G$ satisfies Condition (b).

\vspace{0.3cm}

Next let us prove that Condition (a) also holds for $G,$ i.e., we have to show that
$$\overline{\Sigma^W_{G} \setminus V_W(G)} \cap V_W (G) = \emptyset.$$
Observe that
$$\Sigma^W_G \subset \Sigma^W_F \hbox{ and } V_W (F) \subset V_W (G),$$
then from the inclusions
$$ \Sigma^W_G \setminus V_W (G) \subset  \Sigma^W_F\setminus V_W (F),$$
and
$$\overline{ \Sigma^W_G \setminus V_W (G)} \subset  \overline{\Sigma^W_F\setminus V_W (F)},$$
the inclusion 
\begin{equation}\label{eq:inclusion1}
\overline{ \Sigma^W_G \setminus V_W (G)} \cap V_W(F) \subset \overline{\Sigma^W_F\setminus V_W (F)} \cap V_W (F),
\end{equation}
follows. 

Since $F$ satisfies Condition (a), we get that $\overline{\Sigma^W_G \setminus V_W (G)}\cap V_W(F)=\emptyset$.

On $V_W(G)\setminus V_W(f_p)$, we have $f_p \not= 0$. By Remark \ref{re:generators}, $x\in\Sigma^W_{\bar{F}}\setminus V_W(f_p)$ if and only if
$${\rm rank } \left[ \begin{array}{c}
\nabla h_1(x) \cr
\vdots \cr
\nabla h_{N-n+1}(x) \cr
\nabla \bar{f}_1(x) \cr
\vdots \cr
\nabla \bar{f}_{p-1}(x) \cr
\end{array}
\right] < N-n+1+p-1.$$

On the other hand, on $V_W(G)$, for $i=1,\ldots,p-1$, we have
$$\nabla \bar{f}_i = \frac{1}{\Vert F \Vert^2} \left( \Vert F \Vert \nabla f_i - f_i \nabla \Vert F \Vert \right)= \frac{1}{\Vert F \Vert} \nabla f_i.$$
Hence $x\in\Sigma^W_{\bar{F}} \cap \{f_p \not= 0 \} $ if and only if $x \in \Sigma^W_G \cap \{f_p \not= 0 \}$.

Therefore we get
\begin{equation}\label{eq:equality1}
\Sigma^W_{\bar{F}} \cap \left[ V_W(G) \setminus V_W (f_p) \right] = \Sigma^W_{G} \cap \left[ V_W(G) \setminus V_W (f_p) \right].
\end{equation}
We have
$$\overline{\Sigma^W_{G}\setminus V_W(G)}\cap V_W (G) = \overline{\Sigma^W_{G} \setminus V_W(G)} \cap \left[(V_W(G) \setminus V_W(f_p))\cup V_W(F) \right]=$$
$$ \overline{\Sigma^W_{G} \setminus V_W(G)} \cap V_W(F) \bigcup \overline{\Sigma^W_{G} \setminus V_W(G)} \cap \left[ V_W(G) \setminus V_W(f_p) \right].$$
Since $\Sigma^W_{G}$ is closed and $\overline{\Sigma^W_G \setminus V_W (G)}\cap V_W(F)=\emptyset$, we conclude that
$$\overline{\Sigma^W_{G} \setminus V_W(G)} \cap V_W(G) \subset \Sigma^W_{G} \cap \left[ V_W(G) \setminus V_W(f_p) \right].$$
It follows from Equality \eqref{eq:equality1} that:
$$\overline{\Sigma^W_{G} \setminus V_W(G)} \cap V_W(G) \subset \Sigma^W_{\bar{F}} \cap \left[ V_W(G) \setminus V_W(f_p) \right].$$
By Condition (b), the right-hand side of the above inclusion is empty, so
$$\overline{\Sigma^W_G \setminus V_W(G)} \cap V_W (G) = \emptyset.$$
 Therefore Condition (a) holds for $G$.
\end{proof}

According to Lemma \ref{lem:fib} and Theorem \ref{t:fibration} the projection $\bar{G}: W \setminus V_W(G) \rightarrow S^{p-2}$ is also a locally trivial fibration.

\vspace{0.3cm}

Denote by $M_F$ and $M_G$ the fibers of $\bar{F}: W \setminus V_W(F) \rightarrow S^{p-1}$ and $\bar{G}: W \setminus V_W(G) \rightarrow S^{p-2},$ respectively. Now we can relate them as follows.

\begin{theorem}\label{t:homotopy}
The fiber $M_G$ is homotopy equivalent to the fiber $M_F$.
\end{theorem}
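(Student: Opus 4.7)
The plan is to exhibit $M_G$ as the total space of a locally trivial fibration over a contractible base with fiber $M_F$, and conclude by contractibility of the base.

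First, I would recognize that $\bar{G}$ factors through $\bar{F}$. Set $U := S^{p-1}\setminus\{(0,\ldots,0,\pm 1)\}$ and define $\rho:U\to S^{p-2}$ by
\[
\rho(y_1,\ldots,y_p) = \frac{(y_1,\ldots,y_{p-1})}{\Vert (y_1,\ldots,y_{p-1})\Vert}.
\]
A direct computation shows that on $W\setminus V_W(G)$ one has $\bar{G}=\rho\circ\bar{F}$. Moreover, $\bar{F}$ sends a point of $W\setminus V_W(F)$ to one of the omitted poles $\pm(0,\ldots,0,1)$ precisely when $f_1=\cdots=f_{p-1}=0$ and $f_p\neq 0$, i.e.\ when the point lies in $V_W(G)\setminus V_W(F)$; consequently $\bar{F}^{-1}(U)=W\setminus V_W(G)$.

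By Theorem \ref{t:fibration}, $\bar{F}:W\setminus V_W(F)\to S^{p-1}$ is a locally trivial fibration with fiber $M_F$, so restricting it over the open subset $U$ yields a locally trivial fibration $\bar{F}:W\setminus V_W(G)\to U$ with the same fiber. For any fixed $c'\in S^{p-2}$, the subspace $\rho^{-1}(c')\subset U$ is the open semicircle parametrized by
\[
t\longmapsto (\sqrt{1-t^2}\,c',\,t),\qquad t\in(-1,1),
\]
which is contractible. Since $M_G=\bar{G}^{-1}(c')=\bar{F}^{-1}(\rho^{-1}(c'))$, pulling the above fibration back along the inclusion $\rho^{-1}(c')\hookrightarrow U$ produces a locally trivial fibration $\bar{F}|_{M_G}:M_G\to\rho^{-1}(c')$ with fiber $M_F$. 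Because the base is contractible, $M_G$ is homotopy equivalent to the fiber $M_F$.

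No serious obstacle is anticipated. The two delicate points are the identification $\bar{F}^{-1}(U)=W\setminus V_W(G)$ (needed so that $\rho\circ\bar{F}$ is defined exactly where $\bar{G}$ is), and the standard observation that local triviality of a fibration is inherited by restriction to the preimage of an arbitrary subspace of the base; both are straightforward to verify.
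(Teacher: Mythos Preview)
Your argument is correct and considerably more direct than the paper's. The paper proceeds by replacing $M_F$ and $M_G$ by compact manifolds with boundary $M_F^\delta$ and $M_G^\delta$ using carpeting functions (Lemma~\ref{lem:carpeting}, Corollary~\ref{HomEqu}), and then applies Morse theory for manifolds with boundary to the function $\bar f_p$ on $M_G^\delta$: after showing there are no interior critical points and that all boundary critical points are correct and point in the appropriate direction (Lemmas~\ref{lem:singularities1} and \ref{lem:singularities2}), it concludes that $M_G^\delta$ is homeomorphic to $M_F^\delta\times[-1,1]$. Your route bypasses all of this by exploiting the factorization $\bar G=\rho\circ\bar F$ and the already-established fibration $\bar F$ from Theorem~\ref{t:fibration}; the fiber $M_G$ then appears as the total space of the restricted bundle over the contractible arc $\rho^{-1}(c')$, hence is trivially $M_F\times(-1,1)$. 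What the paper's approach buys is a self-contained argument that does not rely on the global triviality of bundles over contractible bases, and it develops the carpeting/Morse machinery reused later (e.g.\ in Proposition~\ref{p:Euler}); what your approach buys is brevity and conceptual transparency, and it makes the homotopy equivalence an immediate consequence of the fibration theorem rather than a separate analytic argument.
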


Before starting the proof we need to state some important remarks and results. Without lost of generality, we can consider the fiber

$$M_F = \bar{F}^{-1} (1,0,\ldots,0) \cap (W  \setminus V_W(F)).$$

So,
$$ M_F = \left\{ f_1 >0, f_2=\ldots=f_p=0  \right\} \cap (W \setminus \{f_1=f_2=\ldots=f_p =0 \}) =$$ $$\left\{ f_1 >0, f_2=\ldots=f_p=0  \right\} \cap (W \setminus \{f_1 =0 \}).$$
Hence,
$$M_F = \left\{ f_1 >0, f_2=\ldots=f_p=0 \right\} \cap W.$$
Similarly, $$M_G =  \left\{ f_1 >0, f_2=\ldots=f_{p-1}=0 \right\} \cap W.$$
Note that $\overline{M_F} =M_F \cup V_W(F)$ and that $V_W(F) = \overline{M_F} \cap \{f_1 = 0 \}$.

\begin{lemma}\label{lem:carpeting}
The functions ${f_1}_{\vert M_{F}}$ and ${f_1}_{\vert M_{G}}$ are carpeting functions (see \cite{TH}) for $M_F$ and $M_G$.
\end{lemma}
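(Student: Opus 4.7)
The plan is to verify the three defining properties of a carpeting function in the sense of \cite{TH}: non-negativity with zero set equal to the boundary $\overline{M_F}\setminus M_F$, properness of the restriction to the open part, and absence of critical points in a neighborhood of the zero set. I will treat $f_1|_{M_F}$ in detail; the argument for $f_1|_{M_G}$ is formally identical after substituting $G$ for $F$ and invoking Lemma \ref{lem:fib}.

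First I would pin down the smooth structure of $M_F$ and describe its normal space in $\mathbb{R}^N$. Condition (b) makes $\bar{F}|_W$ a submersion on $W\setminus V_W(F)$, so $M_F=\bar{F}^{-1}(1,0,\ldots,0)\cap W$ is a smooth semi-algebraic submanifold of $W$ of dimension $n-p$. At a point $x\in M_F$ one has $f_1(x)>0$ and $f_2(x)=\cdots=f_p(x)=0$, so the Milnor vectors collapse to $\omega_{1,j}(x)=f_1(x)\nabla f_j(x)$ for $j=2,\ldots,p$. Using Remark \ref{re:generators} and Lemma \ref{matrix} at such $x$, the hypothesis $x\notin\Sigma_{\bar{F}}^W$ provided by Condition (b) translates into the linear independence of the vectors $\nabla h_1(x),\ldots,\nabla h_{N-n+1}(x),\nabla f_2(x),\ldots,\nabla f_p(x)$, which therefore span the normal space $N_xM_F$ to $M_F$ in $\mathbb{R}^N$.

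Next I would check the three carpeting properties for $f_1|_{M_F}$. Non-negativity with the correct zero set is immediate from $f_1>0$ on $M_F$ together with the identity $\overline{M_F}\cap\{f_1=0\}=V_W(F)$ recalled just before the statement. Properness follows from the compactness of $W$: for any compact $K\subset(0,\infty)$, the preimage $f_1^{-1}(K)\cap M_F$ is closed in $W$ and therefore compact. For the critical-point condition, a point $x\in M_F$ is critical for $f_1|_{M_F}$ exactly when $\nabla f_1(x)$ lies in $N_xM_F$, which by the previous paragraph is equivalent to $x\in\Sigma_F^W$; since $f_1(x)>0$ forces $x\notin V_W(F)$, every such critical point belongs to $\Sigma_F^W\setminus V_W(F)$. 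If a sequence $(x_n)$ of critical points of $f_1|_{M_F}$ satisfied $f_1(x_n)\to 0$, compactness of $W$ would furnish a limit $x^\ast\in V_W(F)\cap\overline{\Sigma_F^W\setminus V_W(F)}$, contradicting Condition (a). Hence there exists $\delta>0$ such that $f_1|_{M_F}$ is critical-point-free on $\{0<f_1<\delta\}\cap M_F$.

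The argument for $f_1|_{M_G}$ runs verbatim once Lemma \ref{lem:fib} supplies Conditions (a) and (b) for $G$: the $G$-version of Condition (b) yields the manifold structure of $M_G$ with normal space spanned by $\nabla h_1,\ldots,\nabla h_{N-n+1},\nabla f_2,\ldots,\nabla f_{p-1}$; properness again comes from compactness of $W$; and the $G$-version of Condition (a) rules out accumulation of critical points of $f_1|_{M_G}$ at $V_W(G)$. The only step requiring real attention is the identification of critical points of $f_1|_{M_F}$ with points of $\Sigma_F^W$, which rests on the explicit normal-space description pulled from Lemma \ref{matrix}; once that identification is in hand, the remainder is a routine compactness argument combined with Condition (a).
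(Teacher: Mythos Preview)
Your proof is correct and follows essentially the same approach as the paper: identify the critical points of $f_1|_{M_F}$ as elements of $\Sigma_F^W\setminus V_W(F)$ and then use compactness of $W$ together with Condition~(a) to rule out accumulation at the boundary. Your version is somewhat more thorough (you verify positivity and properness explicitly and give the normal-space description via Lemma~\ref{matrix}), and you make explicit the appeal to Lemma~\ref{lem:fib} in the $M_G$ case, which the paper leaves implicit in the phrase ``the same proof still works''.
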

\proof Under our hypothesis, the function $f_1$ is strictly positive on $M_F$. Let us prove that there exists $\delta >0$ such that $f_1$ is a submersion on $M_F \cap \{0 < f_1 \le \delta \}$.
If it is not the case, we can find a sequence $(\delta_n)_{n \in \mathbb{N}}$ such that $\delta_n >0$, $\delta_n \rightarrow 0$ and $\{ f_1= \delta_n\}$ does not intersect $V_W(\eta)$ transversally, where $\eta$ denotes the mapping $(f_2,\ldots,f_p)$.
This implies that there exists $y_n$ in $\{f_1= \delta_n \}$ such that $y_n \in \Sigma^W_F \setminus V_W(F)$.
Since $W$ is compact, we have $\overline{\Sigma^W_F \setminus V_W(F)} \cap V_W(F) \not= \emptyset$ which contradicts Condition (a) for $F$.
Hence $f_1$ is a carpeting function for $M_F$. The same proof still works for $M_G$, and the lemma follows.
\endproof

\begin{corollary}\label{HomEqu}
The fiber $M_F$ (resp. $M_G$) is homotopy equivalent to $\{f_1 \ge \delta, f_2=\ldots=f_p=0 \} \cap W$ (resp. $\{f_1 \ge \delta, f_2=\ldots=f_{p-1}=0 \} \cap W$).
\end{corollary}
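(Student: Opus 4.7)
The plan is to exploit Lemma \ref{lem:carpeting} directly: since $f_1|_{M_F}$ is a carpeting function, the standard "push along the level sets of $f_1$" argument gives a deformation retraction of $M_F$ onto the sub-level set $\{f_1 \ge \delta\} \cap M_F$. The identification $\{f_1 \ge \delta\} \cap M_F = \{f_1 \ge \delta,\, f_2=\cdots=f_p=0\}\cap W$ is then immediate from the description of $M_F$ given just before Lemma \ref{lem:carpeting}. The argument for $M_G$ is word-for-word the same, replacing $(f_2,\ldots,f_p)$ by $(f_2,\ldots,f_{p-1})$.

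First, I would invoke Lemma \ref{lem:carpeting} to choose $\delta>0$ such that $f_1$ is strictly positive on $M_F$, and $f_1\colon M_F\cap\{0<f_1\le \delta\}\to (0,\delta]$ is a proper $C^2$ submersion. Properness here follows because $M_F\cap\{\varepsilon\le f_1\le \delta\}$ is a closed subset of the compact set $W$ for any $\varepsilon>0$, and because $f_1$ being a carpeting function forces fibers to stay away from the missing boundary $V_W(F)$.

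Next, by Ehresmann's theorem, the restriction
\[
f_1\colon M_F\cap\{0<f_1\le \delta\}\longrightarrow (0,\delta]
\]
is a locally trivial $C^2$ fibration. Since the base $(0,\delta]$ is contractible, this fibration is trivial; in particular there is a strong deformation retraction of $M_F\cap\{0<f_1\le \delta\}$ onto the fiber $M_F\cap\{f_1=\delta\}$, and this retraction is the identity on $\{f_1=\delta\}\cap M_F$. Extending by the identity on $M_F\cap\{f_1\ge \delta\}$ yields a strong deformation retraction of $M_F$ onto $M_F\cap\{f_1\ge \delta\}=\{f_1\ge\delta,\,f_2=\cdots=f_p=0\}\cap W$, proving the first claim.

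The step I expect to be the most delicate is checking properness of $f_1$ on $M_F\cap\{0<f_1\le\delta\}$ near $f_1=0$, because $M_F$ itself is not compact. This is exactly what Lemma \ref{lem:carpeting} was designed to guarantee via Condition (a): a sequence in $M_F$ with $f_1\to 0$ would accumulate on $V_W(F)\subset\overline{M_F}\setminus M_F$, but the same carpeting argument shows this cannot happen inside $\{f_1\ge\varepsilon\}$ for any $\varepsilon>0$. Once this is in place, everything else is a routine application of Ehresmann and gluing, and the parallel argument for $M_G$ (using that Lemma \ref{lem:fib} propagates Conditions (a) and (b) to $G$, so Lemma \ref{lem:carpeting} applies verbatim) completes the proof.
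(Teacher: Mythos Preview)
Your proposal is correct and follows essentially the same route as the paper: both invoke Lemma \ref{lem:carpeting} and use the carpeting/submersion property of $f_1$ on the strip $\{0<f_1\le\delta\}$ to produce the required homotopy equivalence, with the paper phrasing this as ``$M_F$ is diffeomorphic to $\{f_1>\delta,\,f_2=\cdots=f_p=0\}\cap W$ and $f_1$ is carpeting'' while you spell out the Ehresmann/deformation-retraction step explicitly. One minor remark: your ``delicate'' paragraph about properness is overcautious---since $W$ is compact and $M_F\cap\{f_1\in K\}=W\cap\{f_1\in K,\,f_2=\cdots=f_p=0\}$ is closed in $W$ for any compact $K\subset(0,\delta]$, properness is immediate and does not require Condition (a) (that condition is used only for the submersion part of Lemma \ref{lem:carpeting}).
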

\proof Let us consider $M_F.$ This fiber is diffeomorphic to $\{f_1> \delta, f_2=\ldots=f_p=0 \} \cap W$ and since $f_1$ is a carpeting function for $M_{F}$ the result follows.  \endproof

\vspace{0.2cm}

Let us denote by $M_F^\delta$ and $M_G^\delta$ the manifolds with boundary $\{f_1\ge \delta, f_2=\ldots=f_p=0 \} \cap W$ and $\{f_1 \ge \delta, f_2=\ldots=f_{p-1}=0 \} \cap W,$ respectively. Observe that $M_F^\delta= M_G^\delta \cap \{\bar{f}_p = 0 \}$ and remind that $\bar{f}_p$ denotes the function $\frac{f_p}{\Vert F \Vert}.$ We will apply Morse theory to the function $\bar{f}_p$ restricted to $M_G^\delta$.

\vspace{0.2cm}

For this, let $\psi=(f_2,\ldots,f_{p-1})$ and assume that $x$ is a critical point of $\bar{f_p} _{\vert W \cap V(\psi)},$ where $f_1(x) \not= 0.$
Then we have

$${\rm rank} \left[ \begin{array}{c}
\nabla h_1(x) \cr
\vdots \cr
\nabla h_{N-n+1} (x) \cr
\nabla f_2 (x) \cr
\vdots \cr
\nabla f_{p-1}(x) \cr
\nabla \bar{f}_p(x)  \cr
\end{array} \right] < N-n+1 + p-1.$$
On $V(\psi)$, we have $\nabla \bar{f}_i(x)= \frac{1}{\Vert F (x) \Vert} \nabla f_i(x)$, for $i \in \{2,\ldots,p-1 \}$. Hence
$${\rm rank} \left[ \begin{array}{c}
\nabla h_1(x) \cr
\vdots \cr
\nabla h_{N-n+1} (x) \cr
\nabla \bar{f}_2 (x) \cr
\vdots \cr
\nabla \bar{f}_{p-1}(x) \cr
\nabla \bar{f}_p(x)  \cr
\end{array} \right] < N-n+1 + p-1.$$
Since $f_1(x) \not= 0$, the above inequality shows that $x$ belongs to $\Sigma^W_{\bar{F}}$ by Remark \ref{re:generators}. But by Condition (b), $\Sigma^W_{\bar{F}}$ is empty. It follows that $\bar{f_p}_{\vert W \cap V(\psi)}$ has no critical point on $\{f_1 \not= 0\}$.

Our next step is to investigate the critical points of $\bar{f}_p$ restricted to
$$\partial M_G^\delta = \{f_1= \delta, f_2=\ldots=f_{p-1}=0 \} \cap W .$$

\vspace{0.2cm}

\begin{lemma}\label{lem:singularities1}
For $\delta$ sufficiently small, the critical points of $\bar{ f_p}_{\vert \partial M_G^\delta}$ are correct and lie in $\{f_p \not= 0 \}$.
\end{lemma}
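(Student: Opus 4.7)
My plan is to treat the two assertions separately, and in both cases $\delta$ will play the role of a small positive parameter chosen at the end.

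For the location statement---that critical points lie in $\{f_p \neq 0\}$---I would argue by contradiction using Condition (a) together with the compactness of $W$. Suppose for arbitrarily small $\delta>0$ there is a critical point $x = x_\delta$ of $\bar{f_p}_{\vert \partial M_G^\delta}$ with $f_p(x) = 0$. At such a point $f_1(x) = \delta$ and $f_2(x) = \cdots = f_p(x) = 0$, so $\Vert F(x) \Vert = \delta$ and, as in the computation already used in the paragraph preceding this lemma, $\nabla \Vert F \Vert(x) = \nabla f_1(x)$. Direct differentiation then gives $\nabla \bar{f_p}(x) = \frac{1}{\delta} \nabla f_p(x)$. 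The criticality condition on
$$\partial M_G^\delta = \{h_1=\cdots=h_{N-n+1}=0,\ f_1 = \delta,\ f_2=\cdots=f_{p-1}=0\}\cap W$$
then reads that $\nabla f_p(x)$ lies in the span of $\nabla h_1,\ldots,\nabla h_{N-n+1},\nabla f_1,\nabla f_2,\ldots,\nabla f_{p-1}$; equivalently, the Jacobian of $F_{\vert W}$ has rank strictly less than $p$ at $x$, i.e.\ $x \in \Sigma_F^W$. Since $F(x) = (\delta,0,\ldots,0) \neq 0$, this places $x_\delta$ in $\Sigma_F^W \setminus V_W(F)$. Letting $\delta = \delta_n \to 0$ and extracting a convergent subsequence $x_{\delta_n} \to x_0$ (possible by compactness of $W$), one has $F(x_0)=0$, hence $x_0 \in V_W(F) \cap \overline{\Sigma_F^W \setminus V_W(F)}$, contradicting Condition (a). This forces $f_p(x) \neq 0$ once $\delta$ is sufficiently small.

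For the correctness (non-degeneracy in the sense of \cite{TH}) of the critical points, I would use a parameter-genericity argument of semi-algebraic Sard type. All the data---$F$, $W$, the defining functions $h_j$, and $\bar{f_p}$ on the open set $\{f_p \neq 0\}$---are $C^2$ and semi-algebraic, and the family $\{\partial M_G^\delta\}_{\delta \in (0,\delta_0)}$ is a semi-algebraic family of smooth manifolds on $\{f_p \neq 0\}$ (with smoothness for all sufficiently small $\delta$, since Condition (a) forces $f_1$ to be a carpeting function on $W \cap V(f_2,\ldots,f_{p-1})$, as in Lemma \ref{lem:carpeting}). The subset of $(0,\delta_0)$ for which $\bar{f_p}_{\vert \partial M_G^\delta}$ has a degenerate critical point in $\{f_p \neq 0\}$ is a semi-algebraic set of empty interior, so a generic---and in particular arbitrarily small---choice of $\delta$ yields only non-degenerate critical points. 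Combining this with the first paragraph gives the lemma.

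The main obstacle I expect is the second step: the contradiction argument for the location is essentially forced by the set-up, but establishing correctness cleanly requires either invoking a semi-algebraic Morse-theoretic genericity result (Sard-Bertini for semi-algebraic families) or carrying out a direct transversality argument for the map $x \mapsto (f_1(x),\bar{f_p}(x))$ restricted to $W \cap V(f_2,\ldots,f_{p-1}) \cap \{f_p \neq 0\}$; the first approach is conceptually shorter but requires care to identify the correct ambient parameter space, while the second produces the non-degeneracy condition hands-on from the rank of $(\nabla f_1,\nabla \bar{f_p})$.
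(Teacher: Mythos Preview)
Your argument for the location of the critical points in $\{f_p\neq 0\}$ is essentially the paper's argument, recast as a contradiction: the paper states directly that $f_1$ is a carpeting function for $M_F$ (Lemma~\ref{lem:carpeting}), which is equivalent to the full-rank condition on $(\nabla h_1,\ldots,\nabla h_{N-n+1},\nabla f_1,\ldots,\nabla f_p)$ along $M_F\cap\{f_1=\delta\}$ for small $\delta$, and then observes $\nabla\bar f_p=\delta^{-1}\nabla f_p$ there. Your compactness/sequence version proves exactly the same rank statement.

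The genuine gap is in the second part: you have misread the word ``correct''. In the sense used here (Morse theory on manifolds with boundary, after \cite{TH}), a critical point of $\bar f_p{}_{\vert\partial M_G^\delta}$ is \emph{correct} when it is \emph{not} a critical point of $\bar f_p$ on the ambient manifold with boundary $M_G^\delta$; this is precisely what allows one to speak of it as inwards- or outwards-pointing in Lemma~\ref{lem:singularities2}. It has nothing to do with Morse non-degeneracy. The paper's proof of correctness is a one-liner: the paragraph immediately preceding the lemma shows, using Condition~(b), that $\bar f_p{}_{\vert W\cap V(\psi)}$ has no critical points on $\{f_1\neq 0\}$; since the interior of $M_G^\delta$ sits inside $W\cap V(\psi)\cap\{f_1\neq 0\}$, there are no interior critical points at all, so every boundary critical point is automatically correct. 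Your Sard--Bertini genericity argument is therefore aimed at the wrong target, and in any case would only give the conclusion for generic $\delta$ rather than for all small $\delta$; but more to the point, no non-degeneracy is ever needed in the subsequent proof of Theorem~\ref{t:homotopy}, which relies only on the absence of interior critical points together with the inwards/outwards dichotomy of Lemma~\ref{lem:singularities2}.
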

\proof The critical points of $\bar{ f_p}_{\vert \partial M_G^\delta}$ are correct because $\bar{f_p}_{\vert W \cap V(\psi)}$ has no critical points in $\{ f_1 \not= 0 \}$. As $f_1$ is a carpeting function for $M_F$, we can choose $\delta$ sufficiently small in such a way that
$${\rm rank}
 \left[ \begin{array}{c}
\nabla h_1 (x) \cr
\vdots \cr
\nabla h_{N-n+1} (x) \cr
\nabla f_1 (x) \cr
\vdots \cr
\nabla f_{p-1}(x) \cr
\nabla f_p(x)  \cr
\end{array} \right] = N-n+1+p,$$
on $M_F \cap \{f_1 = \delta \}$. Note that on $M_F \cap \{f_1 = \delta \}$, the following equality holds:
$$\nabla \bar{f}_p = \delta^{-1} \nabla f_p,$$
since $\nabla \bar{f}_p = \frac{1}{\Vert F \Vert} \nabla f_p$. Replacing $\nabla f_p(x)$ by $\nabla \bar{f}_p$ in the matrix above, we conclude that
the critical points of $\bar{ f_p}_{\vert \partial M_G^\delta}$ lie in $\{f_p \not= 0 \}$.
\endproof

\begin{lemma}\label{lem:singularities2}
If $\delta$ is small enough then the critical points of $\bar{f_p}_{\vert \partial M_G^\delta}$ in $\{f_p \not= 0 \}$ are outwards-pointing (resp. inwards-pointing) for $M_G^\delta$ if and only if they lie in $\{ f_p >0 \}$ (resp. $\{ f_p < 0 \}$).
\end{lemma}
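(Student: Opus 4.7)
The plan is to identify, at a critical point $x$ of $\bar{f}_p|_{\partial M_G^\delta}$ with $f_p(x)\neq 0$, the scalar $\lambda$ such that $\Pi(\nabla \bar{f}_p)(x) = \lambda\,\Pi(\nabla f_1)(x)$, where $\Pi$ denotes orthogonal projection onto $T_x M_G^\delta$. Since the interior of $M_G^\delta$ lies on the side $f_1>\delta$, the outward normal to $\partial M_G^\delta$ inside $M_G^\delta$ is a negative multiple of $\Pi(\nabla f_1)$, so the critical point is outward-pointing if and only if $\lambda<0$. Using the identity $\nabla \bar{f}_p = \frac{f_1}{\|F\|^3}(f_1\nabla f_p - f_p\nabla f_1)$ valid on $V(\psi)$ together with $\Pi(\nabla f_p) = \mu\,\Pi(\nabla f_1)$ (forced at any critical point since $f_1=\delta\neq 0$), a direct substitution gives
$$
\lambda = \frac{\delta(\delta\mu - f_p)}{\|F\|^3}.
$$
The task thus reduces to showing that, for all sufficiently small $\delta$, the sign of $\delta\mu - f_p$ coincides with that of $-f_p$.

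The first step is a uniform lower bound on $|f_p|$. The relation $\Pi(\nabla f_p) = \mu\,\Pi(\nabla f_1)$ forces $\nabla f_p(x) - \mu\nabla f_1(x)$ to be normal to $M_G$ at $x$, hence to lie in $\mathrm{span}\{\nabla h_1,\ldots,\nabla h_{N-n+1},\nabla f_2,\ldots,\nabla f_{p-1}\}$. Consequently $\nabla f_p(x)$ is a linear combination of the other gradients, i.e., $x\in \Sigma_F^W$; and since $f_1(x)=\delta>0$ and $f_p(x)\neq 0$ we have $x\in \Sigma_F^W\setminus V_W(F)$. Condition (a) together with compactness of $W$ implies that $\overline{\Sigma_F^W\setminus V_W(F)}$ is a compact set disjoint from $V_W(F)$, so continuity of $\|F\|$ yields some $\delta_1>0$ with $\|F\|\geq \delta_1$ on $\Sigma_F^W\setminus V_W(F)$. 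Since $\|F\|^2 = \delta^2 + f_p^2$ on $\partial M_G^\delta$, this gives $|f_p(x)|\geq \sqrt{\delta_1^2-\delta^2}\geq \delta_1/2$ whenever $\delta\leq \delta_1/2$.

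The main obstacle is controlling $\mu$, and I plan to argue by contradiction using Condition (b). Suppose there exist $\delta_n\to 0$ and critical points $x_n\in\partial M_G^{\delta_n}$ with $f_p(x_n)>0$ and $\delta_n\mu_n\geq f_p(x_n)\geq \delta_1/2$; the case $f_p<0$ is symmetric. Then $|\mu_n|\to\infty$. By compactness, extract $x_n\to x^*$; from $f_1(x_n),\ldots,f_{p-1}(x_n)\to 0$ and $f_p(x^*)\geq \delta_1/2$ we get $x^*\in V_W(G)\setminus V_W(F)$. Writing the Lagrange identity
$$
\nabla f_p(x_n) = \mu_n\nabla f_1(x_n) + \sum_i c_{i,n}\nabla h_i(x_n) + \sum_{j=2}^{p-1}\sigma_{j,n}\nabla f_j(x_n),
$$
normalizing by $M_n = \max(|\mu_n|,\max_i|c_{i,n}|,\max_j|\sigma_{j,n}|)\to\infty$, and passing to a subsequential limit, the normalized coefficients (lying in $[-1,1]$, with maximum absolute value equal to $1$) produce a nontrivial relation $\tilde\mu\,\nabla f_1(x^*) + \sum_i\tilde c_i\,\nabla h_i(x^*) + \sum_j\tilde\sigma_j\,\nabla f_j(x^*) = 0$; hence $x^*\in \Sigma_G^W$.

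To close the argument, I compute $d\bar{F}$ at $x^*$. Since $F(x^*) = (0,\ldots,0,f_p(x^*))$ with $f_p(x^*)\neq 0$, a direct calculation gives $d\bar{f}_p|_{x^*}=0$ while $d\bar{f}_j|_{x^*} = df_j|_{x^*}/|f_p(x^*)|$ for $1\leq j\leq p-1$. Therefore the rank of $d\bar{F}|_{T_{x^*}W}$ equals the rank of $dG|_{T_{x^*}W}$, which is strictly less than $p-1$ because $x^*\in \Sigma_G^W$. Thus $x^*\in \Sigma_{\bar{F}}^W$, contradicting Condition (b) since $x^*\in W\setminus V_W(F)$. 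This completes the proof.
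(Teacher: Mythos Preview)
Your argument is correct, but it follows a genuinely different route from the paper's. The paper argues by contradiction via the Curve Selection Lemma: assuming an inward-pointing critical point in $\{f_p>0\}$, it produces an analytic curve $\gamma(t)$ in $V(\psi)\cap W$ along which both $f_1$ and $\bar{f_p}$ are strictly increasing; Condition~(a) forces $f_p(\gamma(0))>0$, hence $\bar{f_p}(\gamma(0))=1$, and monotonicity then yields $\bar{f_p}(\gamma(t))>1$, contradicting $|\bar{f_p}|\le 1$. In particular the paper uses only Condition~(a) and the semi-algebraic structure (through the Curve Selection Lemma). Your approach is more computational: you express the Lagrange multiplier $\lambda$ explicitly via the auxiliary multiplier $\mu$, use Condition~(a) and compactness for a uniform lower bound on $|f_p|$, and then a sequential normalization argument to show that a failure would force $x^*\in\Sigma_{\bar F}^W$, contradicting Condition~(b). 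Your route avoids the Curve Selection Lemma entirely (hence fits the generalization mentioned in the remark after Theorem~\ref{t:fibration} to the non-semi-algebraic $C^2$ setting), at the price of invoking Condition~(b) as well as~(a). Both proofs are clean; the paper's is more geometric and hypothesis-economical, yours is more elementary and portable.
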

\proof Suppose the lemma is false. Then, for $\delta>0$ small enough we can find a critical point $x_\delta$ of $\bar{f_p}_{\vert \partial M_G^\delta}$ which is inwards-pointing and belongs to $\{ f_p > 0 \}$. Therefore, there exists $\lambda(x_\delta) >0$ such that
$$\nabla \bar{f_p}_{\vert V(\psi) \cap W} (x_\delta) = \lambda (x_\delta) \nabla {f_1}_{\vert V (\psi) \cap W} (x_\delta).$$

By the Curve Selection Lemma, we can find an analytic curve $\gamma(t)\subset V(\psi)\cap W$ defined on a small enough interval $[0,\nu[$ such that $f_1(\gamma(0))= 0$, and for $t \in ]0,\nu[$:
\begin{itemize}
\item $f_p(\gamma(t))>0$,
\item $f_1(\gamma(t)) >0$,
\item $\nabla \bar{f_p}_{\vert V(\psi) \cap W} (\gamma(t)) = \lambda (\gamma(t)) \nabla {f_1}_{\vert V (\psi) \cap W} (\gamma(t))$ with $\lambda(\gamma (t)) >0$.
\end{itemize}
Thus we get
$$(\bar{f_p} \circ \gamma)'(t)= \langle \nabla \bar{f_p}_{\vert V(\psi) \cap W} (\gamma(t)),\gamma'(t) \rangle=$$ $$\lambda(\gamma(t)) \langle \nabla {f_1}_{\vert V(\psi) \cap W} (\gamma(t)),\gamma'(t) \rangle=
\lambda(\gamma (t))(f_1 \circ \gamma)'(t).$$
Assume that $(f_1 \circ \gamma)' \le 0$ and therefore the function $f_1 \circ \gamma $ decreases. Since $f_1(\gamma(t))$ tends to $0$ as $t$ tends to $0$, it follows that $f_1 \circ \gamma (t) \le 0$, which is impossible. Hence $(f_1 \circ \gamma)'>0$. We deduce that $(\bar{f_p} \circ \gamma)'>0$ and $\bar{f_p} \circ \gamma$ is strictly increasing.

Let us evaluate $f_p (\gamma(0))$. If $f_p(\gamma(0))=0$ then $\gamma(0) \in V_W(F)$. Furthermore, we know that
$$\nabla \bar{f_p}(\gamma(t))= \frac{1}{\Vert F(\gamma(t)) \Vert^3}  f_1^2 (\gamma(t)) \nabla f_p (\gamma(t)) -f_p(\gamma(t))f_1(\gamma(t)) \nabla f_1(\gamma(t)).$$
This implies that $\gamma(t)$ belongs $\Sigma_F^W\setminus V_W(F)$ for $t>0,$ since $\gamma(t)$ is a critical point of $\bar{f_p}_{\vert \partial M_G^{\delta(t)}},$ where $\delta(t)=f_{1}(\gamma(t))$. Therefore, $\gamma(0)$  belongs to $\overline{\Sigma_F^W\setminus V_W(F)} \cap V_W(F)$ which is empty. Consequently, $f_p(\gamma(0)) >0$ and $\bar{f_p}(\gamma(0))= 1$ because $f_1(\gamma(0))=\ldots=f_{p-1}(\gamma(0))=0.$

Since $\bar{f_p} \circ \gamma (t)$ tends to $1$ when $t$ tends to $0$, then the monotonicity of $\bar{f_p} \circ \gamma$ gives that $\bar{f_p}(\gamma(t))>1$, which contradicts the fact that $\vert \bar{f_p} \vert \le 1$. \endproof

Now we have the main ingredients to prove Theorem \ref{t:homotopy}.

\proof (Theorem \ref{t:homotopy})
Applying the previous lemmas and Morse theory for manifolds with boundary, we get that $M_G^\delta \cap \{f_p \ge 0\}$ is homeomorphic to $M_F^{\delta}\times [0,1]$ and $M_G^\delta \cap \{f_p \le 0\}$ is homeomorphic to $M_F^{\delta}\times [-1,0]$. Therefore $M_G^\delta $ is homeomorphic to $M_F^\delta \times [-1,1]$ which is a consequence of the gluing $M_G^\delta \cap \{f_p \ge 0\}$ and $M_G^\delta \cap \{f_p \le 0\}$ along
$M_G^\delta \cap \{f_p=0 \}= M_F^\delta$. Now, we can use Corollary \ref{HomEqu} to finish the proof. \endproof

\subsection{\bf{Case $p=2$}}

Let $F=(f_1,f_2): \mathbb{R}^N \rightarrow \mathbb{R}^2$ be a $C^2$ semi-algebraic map satisfying Conditions (a) and (b).
Let us denote by $M_{f_1}^+$ the set $\{f_1 >0 \} \cap W$ and by $M_{f_1}^-$ the set $\{f_1 < 0 \} \cap W$

\begin{theorem}\label{t:Case2}
The fiber $M_F$ is homotopy equivalent to $M_{f_1}^+$ and $M_{f_1}^-$.
\end{theorem}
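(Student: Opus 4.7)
The plan is to mimic the proof of Theorem \ref{t:homotopy}, with $N_\delta := \{f_1 \geq \delta\} \cap W$ playing the role of $M_G^\delta$ and $\bar{f}_2$ playing the role of $\bar{f}_p$. By symmetry (replacing $f_1$ with $-f_1$) it suffices to treat $M_{f_1}^+$; alternatively, local triviality of $\bar{F}$ over the connected base $S^1$ (Theorem \ref{t:fibration}) identifies $\bar{F}^{-1}(-1,0)$ with $M_F$ up to diffeomorphism, so once $M_{f_1}^+ \simeq M_F$ is proved the case of $M_{f_1}^-$ follows.

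The first step is to adapt Lemma \ref{lem:carpeting} to show that $f_1$ is a carpeting function for $M_{f_1}^+$, so that $M_{f_1}^+ \simeq N_\delta$ for all sufficiently small $\delta > 0$, while $M_F \simeq M_F^\delta := \{f_1 \geq \delta,\ f_2 = 0\} \cap W$ as in Corollary \ref{HomEqu}. Suppose for contradiction that $f_1|_W$ has critical points $x_n$ with $f_1(x_n) \searrow 0$; by compactness of $W$, after passing to a subsequence we may assume $x_n \to x_\infty$ with $f_1(x_\infty)=0$. Any critical point of $f_1|_W$ with $f_1 \neq 0$ is also a critical point of $F|_W$ lying outside $V_W(F)$, so if $x_\infty \in V_W(F)$ one obtains a point of $\overline{\Sigma_F^W \setminus V_W(F)} \cap V_W(F)$, contradicting Condition (a). Otherwise $f_2(x_\infty) \neq 0$, and formula \eqref{eq:projection} gives $\nabla \bar{f}_1(x_\infty) = |f_2(x_\infty)|^{-1}\,\nabla f_1(x_\infty)$, so $\nabla \bar{f}_1|_W(x_\infty) = 0$ and hence $x_\infty \in \Sigma_{\bar{F}}^W$, contradicting Condition (b).

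The second step is Morse theory of $\bar{f}_2 = f_2/\|F\|$ on the compact manifold-with-boundary $N_\delta$, whose zero-level set inside $N_\delta$ equals $M_F^\delta$. Three facts parallel to the proof of Theorem \ref{t:homotopy} are needed. First, $\bar{f}_2$ has no critical points in the interior of $N_\delta$: by Remark \ref{re:generators} applied with $i=1$, on $\{f_1 \neq 0\}$ the vanishing of $\nabla \bar{f}_2|_W$ would place the point in $\Sigma_{\bar F}^W = \emptyset$. Second, for $\delta$ small the critical points of $\bar{f}_2|_{\partial N_\delta}$ lie in $\{f_2 \neq 0\}$: this adapts Lemma \ref{lem:singularities1} using that $(h_1,\ldots,h_{N-n+1},f_1,f_2)$ has maximal Jacobian rank on $M_F \cap \{f_1 = \delta\}$, combined with the equality $\nabla \bar{f}_2 = \delta^{-1}\nabla f_2$ on $\{f_2=0,\ f_1=\delta\}$. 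Third, such critical points with $f_2 > 0$ (resp.\ $f_2 < 0$) are outwards- (resp.\ inwards-) pointing for $N_\delta$: this adapts Lemma \ref{lem:singularities2} by the Curve Selection Lemma, now applied directly on $W$ without the auxiliary constraint $V(\psi)$ since no intermediate coordinates $f_2,\ldots,f_{p-1}$ remain to be cut.

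Combining the three ingredients, Morse theory for manifolds with boundary yields homeomorphisms $N_\delta \cap \{f_2 \geq 0\} \cong M_F^\delta \times [0,1]$ and $N_\delta \cap \{f_2 \leq 0\} \cong M_F^\delta \times [-1,0]$, glued along $M_F^\delta$ to give $N_\delta \cong M_F^\delta \times [-1,1]$. Hence $M_{f_1}^+ \simeq N_\delta \simeq M_F^\delta \simeq M_F$. The main technical obstacle is the carpeting step: Condition (a) alone is insufficient because one can have $V_W(F) \subsetneq V_W(f_1)$, so one must supplement it with Condition (b) via the explicit formula for $\nabla \bar{f}_1$ to rule out critical sequences of $f_1|_W$ approaching $V_W(f_1) \setminus V_W(F)$. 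Once this is settled, the Morse-theoretic argument proceeds along the same lines as in Theorem \ref{t:homotopy}.
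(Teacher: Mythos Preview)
Your proof is correct and follows essentially the same route as the paper: both reduce to the scheme of Theorem \ref{t:homotopy}, with $N_\delta$ in place of $M_G^\delta$ and $\bar f_2$ in place of $\bar f_p$, and the Morse-theoretic conclusion is identical. The one noteworthy difference is in the carpeting step: you work hard to exclude critical sequences of $f_1\vert_W$ approaching $V_W(f_1)$ via Conditions (a) and (b), whereas the paper simply observes that $f_1\vert_W$ is a semi-algebraic $C^2$ function on a compact semi-algebraic manifold and therefore has only finitely many critical values, so some interval $]0,\delta]$ avoids them all. Your argument has the virtue of not invoking semi-algebraicity at this point, but in the given setting the paper's one-line justification is considerably shorter.
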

\proof The proof is the same as Theorem \ref{t:homotopy}. We just have to check that $f_1$ is a carpeting function for $M_{f_1}^+$ and $-f_1$ is a carpeting function for $M_{f_1}^-$. Since ${f_1}_{\vert W}: W \rightarrow \mathbb{R}$ has a finite number of critical values, then there exists $\delta >0$ such that $]0,\delta]$ and $[-\delta,0[$ do not contain any critical value of ${f_1}_{\vert W}$. \endproof

\subsection{\textbf{Consequences}}
Let $F=(f_1,\ldots,f_p): \mathbb{R}^N \rightarrow \mathbb{R}^p$, $2 \le p \le n-1$, be a $C^2$ semi-algebraic map satisfying Conditions (a) and (b). Let us consider $l \in \{1,\ldots,p \}$ and $I= \{i_1,\ldots,i_l\}$ an $l$-tuple of pairwise distinct elements of $\{1,\ldots,p\}$ and let $f_I$ be the semi-algebraic map $(f_{i_1},\ldots,f_{i_l}): \mathbb{R}^N \rightarrow \mathbb{R}^l$. We know that if $l \ge 2$, then the map $f_I$ satisfies Conditions (a) and (b). Let us also denote by $M_{f_I}$  the fiber of $\bar{f_I}_{\vert W\setminus V_{W}(f_{I})}.$
\begin{corollary}
Let $l \in \{2,\ldots,p\}$ and let $I=\{i_1,\ldots,i_l \}$ be an $l$-tuple of pairwise distinct elements of $\{1,\ldots,p\}$. Then, the fibers $M_{f_I}$ and $M_F$ are homotopy equivalent.
\end{corollary}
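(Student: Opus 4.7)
The plan is to iterate Theorem \ref{t:homotopy} finitely many times, removing one coordinate at each stage, until only the coordinates indexed by $I$ remain. Concretely, set $J = \{1,\ldots,p\}\setminus I = \{j_1,\ldots,j_{p-l}\}$, let $F_0 = F$, and for $k=1,\ldots,p-l$ let $F_k$ be the semi-algebraic map obtained from $F_{k-1}$ by deleting the coordinate $f_{j_k}$. Then $F_k$ has $p-k \ge l \ge 2$ components at every step, so we always stay within the framework of Case $p \ge 3$ of Subsection~3.1 (the terminal reduction from $3$ to $2$ coordinates being included), and $F_{p-l}$ coincides with $f_I$ up to a reordering of the target coordinates.

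The first task is to check, by a finite induction on $k$, that each $F_k$ satisfies Conditions (a) and (b). The base case $k=0$ is the hypothesis on $F$; the inductive step is exactly Lemma~\ref{lem:fib}. Although that lemma is formulated for the removal of the last coordinate $f_p$, neither its statement nor its proof depends on that specific choice: by Remark~\ref{re:generators} and the remark following Lemma~\ref{matrix}, the analogous normal-space description and matrix identity hold with any chosen index $i$ in place of $p$, so a harmless permutation of the target coordinates of $\mathbb{R}^{p-k+1}$ reduces the general situation to the stated one.

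Given this, I would apply Theorem~\ref{t:homotopy} at each stage (after the same cosmetic relabeling) to obtain $M_{F_k} \simeq M_{F_{k-1}}$, and then compose the resulting homotopy equivalences to get
\[
M_F \;=\; M_{F_0} \;\simeq\; M_{F_1} \;\simeq\; \cdots \;\simeq\; M_{F_{p-l}} \;=\; M_{f_I},
\]
which is the desired conclusion. The only genuine obstacle is the purely notational point just mentioned, namely verifying that Lemma~\ref{lem:fib} and Theorem~\ref{t:homotopy} are indeed symmetric in the indices; once this is granted, the corollary follows as a straightforward chain of applications of results already proved.
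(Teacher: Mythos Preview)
Your proposal is correct and follows exactly the approach the paper intends: the paper's own proof is the single sentence ``Apply Theorem~\ref{t:homotopy},'' and your argument just makes explicit the iteration and the index-symmetry that this application presupposes. Your careful check that each intermediate map has at least two components (so that Lemma~\ref{lem:fib} and Theorem~\ref{t:homotopy} apply at every stage) and your remark on the harmless relabeling are precisely the details the paper leaves to the reader.
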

\proof Apply Theorem \ref{t:homotopy}. \endproof

Let us consider the case $l=1$. From the facts explained above we can state the following.
\begin{theorem}\label{t:homotopy2}
For every $j \in \{1,\ldots,p\}$, the fibers $M_{f_{j}}^+$ and $M_{f_{j}}^-$ are homotopy equivalent to the fiber $M_F$.
\end{theorem}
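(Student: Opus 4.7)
The plan is to combine Theorem \ref{t:Case2} with the preceding corollary, reducing the general case to the already-proved two-variable case. Fix $j \in \{1,\ldots,p\}$ and choose any $k \in \{1,\ldots,p\} \setminus \{j\}$. Form the pair $I = \{j,k\}$, so that $f_I = (f_j, f_k): \mathbb{R}^N \to \mathbb{R}^2$. By the preceding corollary (applied with $l = 2$), the fiber $M_{f_I}$ is homotopy equivalent to $M_F$.

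Next, apply Theorem \ref{t:Case2} to the two-component map $f_I$, playing the role of the ``$F$'' there, and with $f_j$ playing the role of its first component. This gives the homotopy equivalences $M_{f_j}^+ \simeq M_{f_I} \simeq M_{f_j}^-$. Concatenating with the equivalence $M_{f_I} \simeq M_F$ from the previous step yields $M_{f_j}^+ \simeq M_F \simeq M_{f_j}^-$, which is exactly the desired conclusion.

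The point requiring a brief justification is that Conditions (a) and (b) pass from $F$ down to the two-coordinate projection $f_I$, and that Theorem \ref{t:Case2} applies with $f_j$ distinguished rather than $f_1$. Both are purely bookkeeping: Lemma \ref{lem:fib} is stated for the canonical projection dropping the last coordinate, but its proof is symmetric in the components of $F$ (nothing in the rank computations or the set-theoretic inclusions uses the ordering), so one may drop the $p-2$ components outside $\{j,k\}$ one at a time to reach $f_I$. Similarly, the proof of Theorem \ref{t:Case2} treats the two coordinates symmetrically, and the carpeting-function argument works for any chosen component. There is no real obstacle here; the work is entirely in the re-indexing, and the statement follows formally once these symmetries are noted.
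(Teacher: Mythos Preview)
Your proof is correct and is exactly the argument the paper intends: the paper's own proof is the single line ``Apply Theorem \ref{t:Case2},'' which unpacks to precisely what you wrote---pass from $F$ to a two-component projection $f_I$ via the preceding corollary, then invoke Theorem \ref{t:Case2} on $f_I$. Your remarks on the harmless re-indexing (dropping coordinates other than the last in Lemma \ref{lem:fib}, and distinguishing $f_j$ rather than $f_1$ in Theorem \ref{t:Case2}) are accurate and simply make explicit what the paper leaves implicit.
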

\proof Apply Theorem \ref{t:Case2}. \endproof

\section{Euler characteristics of fibers and relative links }\label{section5}
In this section, we give several formulae connecting the Euler characteristics of the sets $V_W(f_I),$ which we call the relative links (or links, for short), and the Euler characteristic of the fiber $M_F$. Let us follow the same notations and assumptions for $F$ as in the previous sections.

\vspace{0.2cm}

We prove first a general result which relates the Euler characteristics of the fiber and the link and, as a consequence, we get an obstruction for the links to be homotopy equivalent after projections.

\begin{proposition}\label{p:Euler} We have:
$$\chi \big(V_W(G) \big) - \chi \big( V_W(F)  \big)= (-1)^{n-p} 2.\chi (M_F).$$
\end{proposition}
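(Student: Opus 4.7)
The approach is to stratify $V_W(G)$ by the sign of $f_p$ and to recognize the two open strata as fibers of the fibration built in Theorem \ref{t:fibration}. Indeed, since on the locus $\{f_1=\cdots=f_{p-1}=0\}$ the only nontrivial component of $F$ is $f_p$, one obtains the disjoint semi-algebraic decomposition
\begin{equation*}
V_W(G) \;=\; V_W(F) \;\sqcup\; \big(V_W(G)\cap\{f_p>0\}\big) \;\sqcup\; \big(V_W(G)\cap\{f_p<0\}\big),
\end{equation*}
and the two open strata are exactly the fibers $\bar{F}^{-1}(e_p)$ and $\bar{F}^{-1}(-e_p)$, where $e_p=(0,\ldots,0,1)\in S^{p-1}$. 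By Theorem \ref{t:fibration}, $\bar{F}:W\setminus V_W(F)\to S^{p-1}$ is a locally trivial fibration, and since $p\geq 2$ makes $S^{p-1}$ path-connected, both open strata are diffeomorphic to $M_F$.

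Next, the sets $V_W(F)$ and $V_W(G)$ are compact (being closed subsets of the compact $W$), so their Euler characteristics coincide with the compactly supported ones. By the additivity of $\chi_c$ over a closed/open semi-algebraic partition, equivalently the long exact sequence of the pair $(V_W(G),V_W(F))$, one deduces
\begin{equation*}
\chi(V_W(G))-\chi(V_W(F)) \;=\; \chi_c\!\left(V_W(G)\setminus V_W(F)\right) \;=\; 2\,\chi_c(M_F).
\end{equation*}

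Finally, by Condition (b) together with Remark \ref{re:generators}, the functions $f_2,\ldots,f_p$ cut out $M_F$ transversely inside $W$, so $M_F$ is a smooth semi-algebraic manifold without boundary of dimension $(n-1)-(p-1)=n-p$. Poincaré duality with $\mathbb{Z}_2$ coefficients, which requires no orientability hypothesis, then yields $\chi_c(M_F)=(-1)^{n-p}\chi(M_F)$, and substituting this into the previous displayed equation gives the claimed identity. The only delicate point is the careful bookkeeping between $\chi$ and $\chi_c$ on the non-compact manifold $M_F$: the sign $(-1)^{n-p}$ appearing in the statement is precisely the signature of that Poincaré-duality conversion, while everything else reduces to the fibration structure and additivity of the semi-algebraic Euler characteristic.
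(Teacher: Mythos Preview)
Your argument is correct and takes a genuinely different route from the paper. The paper works with ordinary Euler characteristics throughout: it thickens $V_W(F)$ inside $V_W(G)$ to $\{-\delta\le f_p\le\delta\}$ using Durfee's retraction, applies Mayer--Vietoris to the covering of $V_W(G)$ by $\{f_p\ge\delta\}$, $\{f_p\le-\delta\}$, and $\{-\delta\le f_p\le\delta\}$, and then disposes of the boundary terms $\chi(V_W(G)\cap\{f_p=\pm\delta\})$ by a case analysis on the parity of $n-p$ (odd-dimensional closed manifold versus boundary of an odd-dimensional compact manifold). You instead pass directly to compactly supported Euler characteristic, use its additivity on the closed/open partition $V_W(G)=V_W(F)\sqcup M_F^+\sqcup M_F^-$, and recover the sign $(-1)^{n-p}$ in one stroke via $\mathbb{Z}_2$-Poincar\'e duality on the $(n-p)$-manifold $M_F$. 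Your approach is shorter and treats both parities uniformly, at the cost of invoking $\chi_c$ and duality; the paper's approach stays within elementary algebraic topology (Mayer--Vietoris and the vanishing of $\chi$ for odd-dimensional closed manifolds) but needs the carpeting-function machinery from Lemma~\ref{lem:carpeting} and Durfee's result to set up the covering.
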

\proof
Let us denote by
$$ M_F^+ = \left\{ f_1 =f_2=\ldots=f_{p-1}=0, f_p >0 \right\} \cap W = V_W(G) \cap \{f_p >0 \},$$
and
$$ M_F^- = \left\{ f_1 =f_2=\ldots=f_{p-1}=0, f_p <0 \right\} \cap W = V_W(G) \cap \{f_p <0 \}.$$
Observe that these two sets are diffeomorphic to the fiber $M_{F}$.
We know that $f_p$ is a carpeting function for $M_F^+$ and $-f_p$ is a carpeting function for $M_F^-$. Therefore, there exists $0<\delta \ll 1$ such that $M_F^+$ is homotopy equivalent to $V_W(G)  \cap \{f_p \ge \delta \}$ and $M_F^-$ is homotopy equivalent to $V_W(G) \cap \{f_p \le -\delta \}$.

By the Mayer-Vietoris sequence we can write
$$\chi(V_W(G)) = \chi (V_W(G) \cap \{ f_p \ge \delta \} )+\chi (V_W(G) \cap \{ f_p \le -\delta \} )+ $$
$$\chi (V_W(G) \cap  \{ -\delta \le f_p \le \delta \} ) -\chi(V_W(G) \cap  \{ f_p = \delta \} )-$$ $$\chi(V_W(G) \cap \{ f_p = -\delta \} ).$$
By the above arguments, the first two terms on the right-hand side are equal to $\chi(M_F)$.
The third term is equal to $\chi( V_W(F))$, since by Durfee's result \cite{D} we can choose $\delta>0$ small enough in such a way that $V_W(F)$ is a retract by deformation of $V_W(G) \cap \{ -\delta \le f_p \le \delta \}$.

Furthermore, if $n-p$ is even then the two last Euler characteristics are equal to $0$ because $V_W(G) \cap  \{ f_p = \delta \} $ and $V_W(G) \cap  \{ f_p =- \delta \} $ are odd-dimensional closed manifolds. If $n-p$ is odd, they are equal to $2\chi(M_F)$ because they are the boundaries of the odd-dimensional manifolds $M_F^\delta=V_W(G) \cap \{f_p \geq \delta \},$ Therefore, the result is proved.
\endproof

Let us choose $l \in \{2,\ldots,p\}$ and an $l$-tuple $I=\{i_1,\ldots,i_l\}$ of pairwise distinct elements of $\{1,\ldots,p\}$. Let $J=\{i_1,\ldots,i_{l-1} \}$.
If $l=1$ then we put $J=\emptyset$ and $f_J=0$. We can now formulate the previous proposition in a more general way as follows.
\begin{proposition}\label{LINK} We have:
$$\chi (V_W(f_J))-\chi (V_W(f_I))=(-1)^{n-l} 2 \chi (M_F).$$
\end{proposition}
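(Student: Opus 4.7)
The strategy is to reduce Proposition \ref{LINK} to the already-proved Proposition \ref{p:Euler}, applied to the sub-map $f_I\colon \mathbb{R}^N \to \mathbb{R}^l$ in place of $F$ (and with $f_J$ in place of $G$, since $f_J$ is exactly $f_I$ with its last coordinate $f_{i_l}$ deleted). Thus all that really needs to happen is a change of ambient map.

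Two ingredients, both established earlier in the paper, make the reduction immediate. First, the paragraph preceding the statement records that, for every $l \ge 2$, the sub-map $f_I$ satisfies Conditions (a) and (b) on $W$ — this follows from Lemma \ref{lem:fib} applied iteratively, together with the fact that that lemma is symmetric in the components of $F$ up to relabeling. Hence $f_I$ is admissible as the ambient map in Proposition \ref{p:Euler}, with $l$ playing the role of $p$. Second, the Corollary just above the statement gives the homotopy equivalence $M_{f_I} \simeq M_F$, so $\chi(M_{f_I}) = \chi(M_F)$.

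Feeding these into Proposition \ref{p:Euler} yields
$$\chi(V_W(f_J)) - \chi(V_W(f_I)) = (-1)^{n-l} 2\chi(M_{f_I}) = (-1)^{n-l} 2\chi(M_F),$$
which is the required identity. The degenerate case $l = 1$ (where $V_W(f_J) = W$ by the convention $f_J = 0$) lies outside the formal scope of Proposition \ref{p:Euler}, but it can be handled by rerunning the Mayer--Vietoris/parity argument of that proof directly on $W$: decompose $W$ as $\{f_{i_1} \ge \delta\} \cup \{|f_{i_1}| \le \delta\} \cup \{f_{i_1} \le -\delta\}$, identify the two outer pieces up to homotopy with $M_{f_{i_1}}^{+}$ and $M_{f_{i_1}}^{-}$ (both $\simeq M_F$ by Theorem \ref{t:homotopy2}), and retract the middle slab onto $V_W(f_{i_1})$ by Durfee's result. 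The parity analysis of the level hypersurfaces $\{f_{i_1} = \pm \delta\} \cap W$ is identical to that carried out in Proposition \ref{p:Euler}, with the exponent $n-p$ simply replaced by $n-1$.

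The only real subtlety is checking that Proposition \ref{p:Euler} transfers cleanly to the sub-map; but its proof used only Conditions (a) and (b) for the ambient map plus the carpeting function role of the deleted coordinate on the corresponding fiber, and these are symmetric in the components of $F_I$. So no genuine obstacle arises, and the entire argument amounts to an invocation of the earlier proposition.
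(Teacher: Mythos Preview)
Your proposal is correct and follows essentially the same route as the paper, whose entire proof reads ``Same proof as Proposition \ref{p:Euler}.'' You have simply unpacked that sentence: verify that $f_I$ inherits Conditions (a) and (b), invoke Proposition \ref{p:Euler} with $f_I$ in the role of $F$ and $l$ in the role of $p$, and then replace $\chi(M_{f_I})$ by $\chi(M_F)$ via the homotopy equivalence already recorded; your separate treatment of the $l=1$ case is more explicit than the paper but uses the identical Mayer--Vietoris/parity argument.
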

\proof Same proof as Proposition \ref{p:Euler}. \endproof
In particular, we obtain the following two corollaries
\begin{corollary}
Let $j \in \{1,\ldots,p \}$, then we have $$\chi(V_W(f_j))=\chi(W)-(-1)^{n-1} 2\chi(M_F).$$
\end{corollary}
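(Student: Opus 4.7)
The plan is to recognize this as the boundary case $l = 1$ of Proposition \ref{LINK}: with $I = \{j\}$, the recipe laid out immediately before that proposition stipulates $J = \emptyset$ and $f_J \equiv 0$, so $V_W(f_J) = W$. Substituting into $\chi(V_W(f_J)) - \chi(V_W(f_I)) = (-1)^{n-l} 2\chi(M_F)$ with $l = 1$ and rearranging yields the desired identity.

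What needs checking is that the Mayer--Vietoris argument of Proposition \ref{p:Euler} survives the degeneracy $l=1$, since Proposition \ref{LINK} is formally stated for $l \ge 2$. I would simply re-run that argument: pick a small $\delta > 0$ and decompose $W$ as the union of the three pieces $\{f_j \geq \delta\} \cap W$, $\{-\delta \leq f_j \leq \delta\} \cap W$, and $\{f_j \leq -\delta\} \cap W$, with pairwise intersections $\{f_j = \pm \delta\} \cap W$. By Theorem \ref{t:Case2}, $f_j$ is a carpeting function on $M_{f_j}^+$ and $-f_j$ is one on $M_{f_j}^-$, so the two outer pieces deformation retract onto $M_{f_j}^+$ and $M_{f_j}^-$, both homotopy equivalent to $M_F$ by Theorem \ref{t:homotopy2}. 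Durfee's retraction result guarantees that the middle piece deformation retracts onto $V_W(f_j)$ for $\delta$ small enough.

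Inclusion--exclusion then yields
$$\chi(W) = 2\chi(M_F) + \chi(V_W(f_j)) - \chi(\{f_j = \delta\} \cap W) - \chi(\{f_j = -\delta\} \cap W).$$
The only parity check left is on the two boundary terms: when $n-1$ is even, they are odd-dimensional closed manifolds and contribute nothing; when $n-1$ is odd, each is the boundary of an odd-dimensional compact manifold homotopy equivalent to $M_F$, so each contributes $2\chi(M_F)$. The two parities merge into a single identity with sign $(-1)^{n-1}$, producing the stated formula. I do not expect any serious obstacle here — the computation is a line-by-line transcription of the proof of Proposition \ref{p:Euler}, with $V_W(G)$ replaced by $W$ and $f_p$ by $f_j$.
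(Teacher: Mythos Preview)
Your proposal is correct and matches the paper's own proof, which consists of the single line ``We apply Proposition \ref{LINK} to the case $l=1$.'' Your first paragraph is exactly that observation, and your second and third paragraphs supply the verification that the Mayer--Vietoris computation of Proposition \ref{p:Euler} goes through when $V_W(G)$ is replaced by $W$ and $f_p$ by $f_j$ --- details the paper leaves implicit but which are indeed routine.
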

\proof We apply Proposition \ref{LINK} to the case $l=1$.
\endproof

\begin{corollary}
Let $l\in \{3,\ldots,p \}$ and let $I=\{i_1,\ldots,i_l\} \subset \{1,\ldots,p\}$. Let $K$ be an $(l-2)$-tuple of pairwise distinct elements of $I$. Then we have: $$\chi (V_W(f_K))=\chi (V_W(f_I)).$$
\end{corollary}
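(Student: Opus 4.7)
The plan is to derive this as an immediate consequence of Proposition \ref{LINK} by applying it twice: once to step down from the $l$-tuple $I$ to an intermediate $(l-1)$-tuple $J$, and once more to step from $J$ down to the $(l-2)$-tuple $K$. Since the hypothesis $l\geq 3$ guarantees that the intermediate tuple $J$ has size $l-1\geq 2$, the Proposition is applicable at both stages.

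More precisely, I would first choose any element $j\in I\setminus K$ and set $J=I\setminus\{j\}$, so that $K\subset J\subset I$ with $|K|=l-2$, $|J|=l-1$, $|I|=l$. Reordering the entries, we can view $J$ as the sub-tuple of $I$ obtained by deleting its last element, and $K$ as the sub-tuple of $J$ obtained by deleting its last element, putting both pairs $(I,J)$ and $(J,K)$ into the form required by Proposition \ref{LINK}.

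Next, I would apply Proposition \ref{LINK} to the pair $(I,J)$ to obtain
\begin{equation*}
\chi\bigl(V_W(f_J)\bigr)-\chi\bigl(V_W(f_I)\bigr)=(-1)^{n-l}\,2\,\chi(M_F),
\end{equation*}
and then to the pair $(J,K)$, in which the ``$l$'' of the Proposition is replaced by $l-1$, to obtain
\begin{equation*}
\chi\bigl(V_W(f_K)\bigr)-\chi\bigl(V_W(f_J)\bigr)=(-1)^{n-(l-1)}\,2\,\chi(M_F)=-(-1)^{n-l}\,2\,\chi(M_F).
\end{equation*}
Adding these two identities, the contributions involving $\chi(M_F)$ cancel exactly, leaving $\chi(V_W(f_K))-\chi(V_W(f_I))=0$, as desired.

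I do not expect any serious obstacle, since Proposition \ref{LINK} has already done the real work: it packages the Mayer--Vietoris / carpeting-function argument together with the Durfee retraction that identifies the central slice with $V_W$ of the larger index set. The only point requiring a moment's care is the implicit observation that the sub-tuples $f_I$ and $f_J$ inherit Conditions (a) and (b) from $F$ (so that Proposition \ref{LINK} genuinely applies to them and their fibers are homotopy equivalent to $M_F$); this is precisely Lemma \ref{lem:fib} applied inductively, together with Theorem \ref{t:homotopy} to identify $\chi(M_{f_I})=\chi(M_{f_J})=\chi(M_F)$, which is what makes the parity cancellation in the two signed contributions meaningful.
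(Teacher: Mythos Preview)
Your proof is correct and follows essentially the same approach as the paper: both construct an intermediate $(l-1)$-tuple $J$ with $K\subset J\subset I$, apply Proposition~\ref{LINK} twice, and use the sign flip between $(-1)^{n-l}$ and $(-1)^{n-(l-1)}$ to obtain the cancellation. The paper phrases the conclusion as the equality $\chi(V_W(f_J))-\chi(V_W(f_I))=\chi(V_W(f_J))-\chi(V_W(f_K))$, whereas you add the two identities directly, but this is the same argument.
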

\proof Let $J$ be an $(l-1)$-tuple built from adding to $K$ one element of $I \setminus K$. By Proposition \ref{LINK}, the corollary follows from equality $$\chi (V_W(f_J))-\chi (V_W(f_I))=\chi (V_W(f_J))-\chi (V_W(f_K)).$$
\endproof

In order to express the Euler characteristics of all the sets $V_W(f_I)$, it is sufficient to compute the Euler characteristic of a set $V_W(f_I)$ when $\# I=2$. But, for $I=\{1,2\}$ it follows by Proposition \ref{LINK} that
$$\chi(V_W(f_I))=\chi(V_W(f_1))-(-1)^n 2\chi(M_F).$$
Therefore, we can summarize all these results in the following theorem.

\begin{theorem}\label{t:main}
Let $l\in \{1,\ldots,p \}$ and let $I=\{i_1,\ldots,i_l\}$ be an $l$-tuple of pairwise distinct elements of $\{1,\ldots,p\}$.

\begin{itemize}

\item [(1)] For $n$ even, we have  $$\chi(V_W(f_I))= 2\chi(M_F) \hbox{ if } l \hbox{ is odd} ,$$
 and $$\chi(V_W(f_I))= 0 \hbox{ if } l\hbox{ is even}.$$

\vspace{0.2cm}

\item [(2)] For $n$ odd, we have  $$\chi(V_W(f_I))=\chi(W)- 2\chi(M_F) \hbox{ if } l \hbox{ is odd},$$ and $$\chi(V_W(f_I))=\chi(W)\hbox{ if } l \hbox{ is even}.$$

\end{itemize}

\end{theorem}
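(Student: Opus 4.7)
The plan is to prove the theorem by induction on $l = \#I$, using the preceding Corollary as the base case $l=1$ and Proposition \ref{LINK} as the inductive recursion. Before starting the induction I would record one topological observation that accounts for the different shapes of the two parity cases: since $W$ is a compact semi-algebraic manifold without boundary of dimension $n-1$, when $n$ is even $W$ is a closed manifold of odd dimension, and consequently $\chi(W) = 0$. This makes the two formulas in item~(1) the same as those in item~(2) with $\chi(W)$ set to zero, so in effect I only need to prove one parity-indexed rule and then read off both items.

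For the base case $l=1$, the Corollary just above the theorem gives
$$\chi(V_W(f_j)) = \chi(W) - (-1)^{n-1} 2\chi(M_F).$$
If $n$ is odd this reads $\chi(W) - 2\chi(M_F)$, matching the $l$ odd row of~(2). If $n$ is even, then $(-1)^{n-1} = -1$ and $\chi(W) = 0$, so the expression collapses to $2\chi(M_F)$, matching the $l$ odd row of~(1). The even-$l$ cases do not arise at $l=1$, so the base case is complete.

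For the inductive step with $l \ge 2$, I would fix any $(l-1)$-subtuple $J \subset I$ and apply Proposition \ref{LINK} to obtain
$$\chi(V_W(f_I)) = \chi(V_W(f_J)) - (-1)^{n-l} 2\chi(M_F),$$
substituting the value of $\chi(V_W(f_J))$ provided by the inductive hypothesis (noting that $l-1$ has parity opposite to $l$). A short four-case analysis on the parities of $n$ and $l$ then yields each of the four formulas. For instance, when $n$ and $l$ are both even, the hypothesis gives $\chi(V_W(f_J)) = 2\chi(M_F)$ (since $l-1$ is odd) and $(-1)^{n-l} = 1$, so $\chi(V_W(f_I)) = 2\chi(M_F) - 2\chi(M_F) = 0$; when $n$ is even and $l$ is odd, the hypothesis gives $\chi(V_W(f_J)) = 0$ and $(-1)^{n-l} = -1$, so $\chi(V_W(f_I)) = 0 + 2\chi(M_F) = 2\chi(M_F)$; the two remaining cases (with $n$ odd) are handled identically, using $\chi(W)$ in place of $0$.

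There is essentially no obstacle: all the geometric content sits in Proposition \ref{LINK} (and hence in Proposition \ref{p:Euler}) together with the Corollary, and the theorem is just a repackaging of those results. The only thing to be careful about is the sign bookkeeping in the parity case analysis, and the clean split between the two items of the theorem is entirely due to the vanishing $\chi(W) = 0$ for odd-dimensional closed $W$.
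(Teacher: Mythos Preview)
Your proposal is correct and follows essentially the same approach as the paper: the paper presents Theorem \ref{t:main} explicitly as a summary of the preceding results (the Corollary for $l=1$ and Proposition \ref{LINK} for the recursion), without writing out a separate proof. Your induction simply makes that summary explicit, and your observation that $\chi(W)=0$ when $n$ is even (since $W$ is then a closed odd-dimensional manifold) is the right way to see why the formulas in (1) and (2) have the shapes they do.
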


\section{Open book structure of polynomial maps and examples}
In this section, we consider our Conditions (a) and (b) for polynomial maps in the local and global settings.
Our purpose is to explain the so-called ``Higher open book structure with singular binding" induced by $\bar{F}$. For further details we refer the reader to \cite{AT,ACT1,ACT2}.

\vspace{0.2cm}

\paragraph{\textbf{Local setting}}
Let $f=(f_1,\ldots,f_p): (\mathbb{R}^N,0) \rightarrow (\mathbb{R}^p,0)$ be a real analytic map germ.  In \cite[Theorem 11.2]{Mi} Milnor proved that if $f$ has \emph{isolated singularity} at origin, then there exists a locally trivial fibration $S^{N-1}_\epsilon\setminus K_\epsilon \rightarrow S^{p-1}$. Although $\bar{f}$ may fail to provide the fibration structure outside a small neighborhood of the link, in \cite{AT, ACT1} the authors consider two similar conditions like (a) and (b) above, in order to prove a generalization of Milnor's fibration on small spheres $S^{N-1}_\epsilon$ with   bundle projection given by $\bar{f}.$

\vspace{0.2cm}

Applying our results in this case, we consider $\rho(x)=\sum_{i=1}^{N}x_{i}^{2}$ the square of distance function to the origin and for $\epsilon>0$ small enough consider $W=\rho^{-1}(\epsilon ^{2}).$ So, $V_W(f)=K_\epsilon$ is the link of the germ $V(f)$. The singular locus $\Sigma_f^W$ (resp. $\Sigma_{\bar{f}}^W$) is then the intersection between the Milnor sets $M(f)=\{ x\in U \mid f \not\pitchfork_x \rho \}$ (resp. $M(\bar{f})=\{ x\in U\setminus V(f) \mid \bar{f} \not\pitchfork_x \rho \})$ and $S_\epsilon^{N-1}$. Therefore, Conditions (a) and (b) above coincide with those conditions studied in \cite{ACT1} and so the Theorem 1.3 in \cite{ACT1} is a special version of our Theorem \ref{t:fibration}. See \cite{ACT1,ACT2,NZ} for further details.

\begin{example}\label{e:Milnor}
Let $f:(\mathbb{R}^{3},0)\rightarrow (\mathbb{R}^{2},0)$, $f(x,y,z)=(x, x^2+y(x^2+y^2)+z^2)$. Then $f$ has only an isolated singularity $(0,0,0)$ and therefore Condition (a) holds. But, for $\epsilon>0$ small enough the projection $\frac{f}{\|f\|}:S^{2}_\epsilon\setminus K_\epsilon\rightarrow S^{1}$ cannot be a locally trivial fibration. Calculations shows that $\{(x,y,0)\in\mathbb{R}^{3}\setminus V(f)| x^4+y^4+2x^2y^2-xy^2=0\}\subset M(\bar{f})$ is a bounded curve such that the origin is contained in its closure, therefore locally Condition (b) fails.
\end{example}

\subsubsection{Homotopy equivalence between local Milnor's fibers}Here we present an application of Theorem \ref{t:homotopy2}, that gives a relation between the Milnor
fiber in the tube and the Milnor fiber in the sphere.
Let $f=(f_1,\ldots,f_p): (\mathbb{R}^n,0) \rightarrow (\mathbb{R}^p,0)$ be a real analytic map germ satisfying Massey's Conditions (a) and (b). Then, it was proved by Massey in \cite{DM} that there exists a Milnor fibration in the tube

$$f_{|}:B_{\epsilon}\cap f^{-1}(S_{\eta}^{p-1})\to S_{\eta}^{p-1}$$ for all small enough $\epsilon >0$ and $0<\eta \ll \epsilon.$ Assume also that the Milnor set $M(\frac{f}{\|f\|})$ is empty. So, by \cite{ACT1} as we explained above, the projection 

$$\frac{f}{\|f\|}: S_{\epsilon}^{n-1}\setminus K_{\epsilon}\to S^{p-1}$$
 is a locally trivial fibration.
 
\begin{proposition} 
The two fibers above $M_{f}$ and $M_{\frac{f}{\|f\|}}$ are homotopy equivalent. 
\end{proposition}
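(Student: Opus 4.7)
The plan is to exhibit both $M_f$ and $M_{\frac{f}{\|f\|}}$ as deformation retracts of a common intermediate space, namely the full $\bar{f}$-fiber in the closed ball. After a rotation of $\mathbb{R}^p$, take $v = (1, 0, \ldots, 0) \in S^{p-1}$ and set
\[
\mathcal{F}_v := \bar{f}^{-1}(v) \cap B_\epsilon = \{x \in B_\epsilon : f_1(x) > 0,\; f_2(x) = \cdots = f_p(x) = 0\};
\]
its two natural slices are $\mathcal{F}_v \cap S_\epsilon^{n-1} = M_{\frac{f}{\|f\|}}$ and $\mathcal{F}_v \cap \{\|f\| = \eta\} = M_f$, noting that $\|f\| = f_1$ throughout $\mathcal{F}_v$.

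For the first equivalence $\mathcal{F}_v \simeq M_{\frac{f}{\|f\|}}$, I would first observe that at $x \in \mathcal{F}_v$ the normal space to $\mathcal{F}_v$ in $\mathbb{R}^N$ coincides with the normal space to the $\bar{f}$-fiber through $x$: formula~(\ref{eq:projection}) gives $\nabla \bar{f}_1(x) = 0$ and $\nabla \bar{f}_i(x) = f_1(x)^{-1}\nabla f_i(x)$ for $i \geq 2$ on $\mathcal{F}_v$, so both normal spaces equal $\operatorname{span}(\nabla f_2(x), \ldots, \nabla f_p(x))$. Any critical point of $\rho|_{\mathcal{F}_v}$ would therefore lie in $M(\bar{f})$, contradicting the hypothesis $M(\bar{f}) = \emptyset$. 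The rescaled gradient flow of $\rho|_{\mathcal{F}_v}$, parametrized so that $\rho$ increases at unit rate, then deformation retracts $\mathcal{F}_v$ onto $\{\rho = \epsilon^2\} \cap \mathcal{F}_v = M_{\frac{f}{\|f\|}}$ in finite time.

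For the second equivalence $\mathcal{F}_v \simeq M_f$, I would use Massey's tube fibration $f : B_\epsilon \cap f^{-1}(\bar{D}_\eta^p \setminus \{0\}) \to \bar{D}_\eta^p \setminus \{0\}$: its restriction to the contractible ray $\{tv : 0 < t \leq \eta\}$ is trivial and yields
\[
\mathcal{F}_v \cap \{\|f\| \leq \eta\} \cong M_f \times (0, \eta],
\]
which is homotopy equivalent to $M_f$. To extend this to all of $\mathcal{F}_v$, I would construct a vector field $X$ on $\mathcal{F}_v \cap \{\|f\| \geq \eta\}$ that is tangent to $\mathcal{F}_v$ and strictly decreases $\|f\|$; its existence reduces to the linear independence of the projections of $\nabla \rho$ and $\nabla \|f\|$ onto $T\mathcal{F}_v$, which follows from Massey's Condition~(b) combined with a \L{}ojasiewicz-type estimate. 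The integral flow of $X$ then brings any point of $\mathcal{F}_v$ into $\{\|f\| \leq \eta\}$ in finite time, yielding the retraction onto $M_f$.

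Combining both equivalences gives $M_f \simeq \mathcal{F}_v \simeq M_{\frac{f}{\|f\|}}$. The main technical obstacle is the construction of the fiber-tangent vector field $X$ in the second step --- a real-analytic analogue of Milnor's original argument in \cite{Mi} --- together with verifying that its integral flow remains bounded away from $V(f)$ while reaching the target level $\{\|f\| = \eta\}$ in finite time.
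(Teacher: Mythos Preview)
Your strategy---exhibiting $M_f$ and $M_{\frac{f}{\|f\|}}$ as deformation retracts of the common space $\mathcal{F}_v=\bar f^{-1}(v)\cap B_\epsilon$---is geometrically natural but takes a completely different route from the paper. The paper never touches $\mathcal{F}_v$; instead it reduces both fibers to the single component function $f_1$: by \cite{DA} the tube fiber $M_f$ is homotopy equivalent to the positive Milnor fiber $M_{f_1}^+$, by Theorem~\ref{t:homotopy2} of the present paper the sphere fiber $M_{\frac{f}{\|f\|}}$ is homotopy equivalent to $S_\epsilon^{n-1}\cap\{f_1>0\}$, and Wall's theorem \cite{W} identifies these two. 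This three-line argument is the whole point of developing Theorems~\ref{t:homotopy} and~\ref{t:homotopy2}, so your proposal bypasses the machinery the paper builds.

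More seriously, your Step~1 contains a gap you do not flag. You correctly verify that $\rho|_{\mathcal{F}_v}$ has no critical points, but $\mathcal{F}_v$ is \emph{not} compact: it has an open end along $V(f)\cap B_\epsilon$ where $f_1\to 0$. Nothing you have said prevents an integral curve of the rescaled gradient of $\rho$ from running into that end before reaching $S_\epsilon$; the hypothesis $M(\bar f)=\emptyset$ controls the tangential component of $\nabla\rho$ only away from $V(f)$, and gives no lower bound as $f_1\to 0$. Absence of critical points on a noncompact manifold does not by itself yield a retraction onto a boundary component---you would need either a properness statement for $(\bar f,\rho)$ (which fails in general) or an auxiliary argument that trajectories stay away from $V(f)$, and that argument is essentially as hard as the problem itself.

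Your Step~2, where you already acknowledge the obstacle, has the same flavor: the vector field $X$ must be tangent to $S_\epsilon$ on the boundary \emph{and} strictly decrease $f_1$ throughout $\mathcal{F}_v\cap\{f_1\ge\eta\}$, and the ``linear independence of the projections of $\nabla\rho$ and $\nabla\|f\|$'' you invoke amounts to asking that $\nabla f_1,\ldots,\nabla f_p,\nabla\rho$ be independent on that region, i.e.\ that $f$ and $\rho$ be transverse there---a condition neither Massey's hypotheses nor a \L ojasiewicz inequality give you away from a tube. The paper's route via projection to $f_1$ was designed precisely to sidestep these vector-field constructions.
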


\proof By \cite{DA} we know that $M_{f}$ is homotopy equivalent to $M_{f_{1}^{+}},$ i.e., the positive Milnor fiber of $f_{1}.$ Then, by Theorem \ref{t:homotopy2} the fiber $M_{\frac{f}{\|f\|}}$ is homotopy equivalent to $S_{\epsilon}^{n-1}\cap \{f_{1}>0\}.$ But by C.T.C. Wall \cite{W}, $M_{f_{1}^{+}}$ and $S_{\epsilon}^{n-1}\cap \{f_{1}>0\}$ are homotopy equivalent.  \endproof

\paragraph{\textbf{Global setting}}
Now consider $F=(f_1,\ldots,f_p): \mathbb{R}^N \rightarrow \mathbb{R}^p$ a real polynomial mapping. In \cite{ACT2} the authors studied the singular open book structure at infinity by using the $\rho$-regularity. In fact, if we choose $W=\rho^{-1}(R)$ with $R$ big enough, our Conditions (a) and (b) means the boundedness of the sets $\overline{M(F)\setminus V}\cap V$ and $M(\bar{F}),$ where $V=F^{-1}(0).$ Therefore, by our Theorem \ref{t:fibration} we recover Theorem 3.2 of \cite{ACT2}.

\vspace{0.2cm}

Now we can consider the example \ref{e:Milnor} above in the global setting. We will show that for $R$ sufficiently large, $\frac{f}{\|f\|}:S^{2}_R\setminus K_R\rightarrow S^{1}$ is a locally trivial fibration.

In fact, since it has only one singular point, then Condition (a) is satisfied.

Now, the set $M(\bar{f})\cap \{z=0\}$ is equal to $\{(x,y,0)\in\mathbb{R}^{3}\setminus V(f)| x^4+y^4+2x^2y^2-xy^2=0\}$ which is a bounded set. The set $M(\bar{f})\cap \{z\neq 0\}$ is equal to $\{(x,y,z)\in\mathbb{R}^{3}\setminus V(f)| 2y=x^2+3y^2, x^2+y^3+z^2-x^2y=0\}$ which is also a bounded set because $0<y<\frac{2}{3}$, $x^2\leq\frac{1}{3}$ and $z^2<\frac{2}{9}.$ So, in both cases our Condition (b) holds for $f$. Therefore, by Theorem \ref{t:fibration} we obtain the desired fibration.


We end  this section with two applications of Theorem \ref{t:main} in the global setting. Note that in this case $V_W(f_I)$ is the link at infinity of $\{f_{I}=0\},$ because we consider $W$ as a sphere of radius big enough. It is known that it is well defined up to homeomorphism. For this reason we denote it by ${\rm Lk}^\infty (V_I)$ instead of $V_W(f_I).$
\begin{example}\label{e:polar}
\emph{Let $f:\mathbb{C}^{2}\rightarrow\mathbb{C}$, $f(x,y)=x(x+y)\bar x$. From \cite{ACT2}, this example is polar weighted homogenous mixed polynomial which verifies Conditions (a) and (b) at infinity. Therefore the fibration $\bar{f}$ exists. The fiber is homotopy equivalent to $\mathbb{R}^2$ and the Euler characteristic of the fiber is $1$. On the other hand, we can write $f$ as a polynomial map $f:\mathbb{R}^{4}\rightarrow\mathbb{R}^2$, $f(a,b,c,d)=((a^2+b^2)(a+c),(a^2+b^2)(b+d))$. Hence the Euler characteristic of the link $\chi({\rm Lk}^\infty ((a^2+b^2)(a+c)=0))=2$, by using our Theorem \ref{t:main}, we get the Euler characteristic of the fiber is $1$.}
\end{example}
\begin{example}\label{e:smooth}
\emph{Let $f:\mathbb{R}^{3}\rightarrow\mathbb{R}^{2}$, $f(x,y,z)=(x^2+y, x+z)$. Then $f$ does not have any singularity. Moreover by computation, the Milnor set $M(\bar{f})$ is empty. Therefore $f$ verifies Conditions (a) and (b) at infinity and the fibration $\bar{f}$ exists. The fiber is homeomorphic to an arc of a circle which has Euler characteristic equal to $1$. On the other hand $\chi({\rm Lk}^\infty (x+z=0))=0$, by using our Theorem \ref{t:main}, we get that the Euler characteristic of the fiber is $1$.}
\end{example}

{\it Acknowledgments.} The first two authors thank the USP-Cofecub project ``UcMa133/12 - Structure fibr\'ee de l'espace au voisinage des singularit\'es des applications". The author Y. Chen is supported by the Brazilian grant FAPESP-Proc. 2012/18957-7. The author A. Andrade is supported by Capes pro-doutoral grant.


\begin{thebibliography}{Reference}

\bibitem{AT} Ara\'ujo dos Santos, R; Tib\u ar M., {\em Real map germs and higher open book structures}, Geom. Dedicata 147 (2010), 177--185.


\bibitem{ACT1} Ara\'ujo dos Santos, R.; Chen, Y.; Tib\u ar, M., {\em Singular open book structures from real mappings}, Cent. Eur. J. Math. 11 (2013), no. 5, 817-- 828.

\bibitem{ACT2} Ara\'ujo dos Santos, R; Chen, Y; Tib\u ar, M., {\em Real polynomial maps and singular open books at infinity}, arXiv:1401.8286, to appear in Math. Scand.

\bibitem{Ch} Chen Y., {\em Milnor fibration at infinity for mixed polynomials}, Cent. Eur. J. Math. 12 (2014), no. 1, 28--38.

\bibitem{CSS} Cisneros-Molina, J. L.; Seade, J.; Snoussi, J., {\em Milnor fibrations and d-regularity for real analytic singularities.} Internat. J. Math. 21 (2010), no. 4, 419–434.

\bibitem{BCR} Bocknak, J.; Coste, M and Roy, M-F., {\em Real Algebraic Geometry}, volume 36, Springer.

\bibitem{D} Durfee, A., {\em Neighborhoods of algebraic sets}, Transaction of the American Math. Society 276 no. 2 (1983), 517--530.




\bibitem{DA} Dutertre, N.; Ara\'ujo dos Santos, R., {\em Topology of real Milnor fibration for non-isolated singularities}, Submitted for publication. Available at  http://arxiv.org/abs/1211.6233

\bibitem{DM} Massey, D., {\em Real analytic Milnor fibrations and a strong \L ojasiewicz inequality. Real and complex singularities}, 268--292, London Math. Soc. Lecture Note Ser., 380, Cambridge Univ. Press, Cambridge, 2010.

\bibitem{Mi} Milnor, J., {\em Singular points of complex hypersurfaces,}
Ann. of Math. Studies 61, Princeton University Press, 1968.

\bibitem{NZ} N\'emethi, A; Zaharia, A., {\em Milnor fibration at infinity}, Indag. Math. (N.S.) 3 (1992), no. 3, 323--335.

\bibitem{Oka} Oka, M., {\em Non degenerate mixed functions}, Kodai Math. J. 33 (2010), no. 1, 1--62.

\bibitem{TH} Thom, R., {\em Ensembles et morphismes stratifi\'es}, Bull. Amer. Math. Soc. 75 (1969), 240–-284.

\bibitem{W} Wall, C. T. C., {\em Topological invariance of the Milnor number mod 2}, Topology 22 (1983), no. 3, 345–-350.

\end{thebibliography}
\end{document}